\def\@settitle{\begin{center}%
		\baselineskip14\p@\relax
		\normalfont\LARGE\scshape\bfseries
		\@title
	\end{center}%
}
\def\subsection{\@startsection{subsection}{2}%
	\z@{.5\linespacing\@plus.7\linespacing}{.5\linespacing}%
	{\normalfont\large\bfseries}}
\def\subsubsection{\@startsection{subsubsection}{3}%
	\z@{.5\linespacing\@plus.7\linespacing}{.5\linespacing}%
	{\normalfont\itshape}}
\definecolor{darkblue}{rgb}{0.0, 0.0, 0.45}
\date{\today}
\theoremstyle{theorem}
\newtheorem{Thm}{Theorem}[section]
\newtheorem{Lem}[Thm]{Lemma}
\newtheorem{Cor}[Thm]{Corollary}
\newtheorem{As}[Thm]{Assumption}
\newtheorem{Def}[Thm]{Definition}
\newtheorem{Rem}[Thm]{Remark}
\theoremstyle{remark}
\definecolor{green}{rgb}{0,.6,0}
\newcommand{\R}{\mathbb{R}}
\newcommand{\N}{\mathbb{N}}
\newcommand{\Let}{\coloneqq}
\newcommand{\RNum}[1]{\uppercase\expandafter{\romannumeral #1\relax}}
\newcommand{\im}{\mbox{Im}}
\newcommand{\basis}{F_{\mathrm b}}
\title[From Static to Dynamic Anomaly Detection]
{From Static to Dynamic Anomaly Detection 
\\with Application to Power System Cyber Security}
\author{Kaikai~Pan,
	Peter~Palensky,
	and~Peyman~Mohajerin~Esfahani}%
	\thanks{The authors are with the Delft University of Technology, The Netherlands (email: \tt{\{K.Pan, P.Palensky,  P.MohajerinEsfahani\}}@tudelft.nl).}
\begin{document} 
\maketitle

\begin{abstract}
Developing advanced diagnosis tools to detect cyber attacks is the key to security of power systems. It has been shown that multivariate data injection attacks can bypass bad data detection schemes typically built on static behavior of the systems, which misleads operators to disruptive decisions. In this article, we depart from the existing static viewpoint to develop a diagnosis filter that captures the dynamics signatures of such a multivariate intrusion. To this end, we introduce a dynamic residual generator approach formulated as robust optimization programs in order to detect a class of disruptive multivariate attacks that potentially remain stealthy in view of a static bad data detector. We investigate two possible desired features: (i) a non-zero transient and (ii) a non-zero steady-state behavior of the residual generator in the presence of an attack. In case (i), the problem is reformulated as a finite, but possibly non-convex, optimization program. We further develop a linear programming relaxation that improves the scalability, and as such practicality, of the diagnosis filter design. In case (ii), it turns out that the resulting robust program admits an exact convex reformulation, yielding a Nash equilibrium between the attacker and the residual generator. This assertion has an interesting implication: the proposed approach is not conservative in the sense that the additional knowledge of the worst-case attack does not improve the diagnosis performance. 
To illustrate our theoretical results, we implement the proposed diagnosis filter to detect multivariate attacks on the system measurements deployed to generate the so-called Automatic Generation Control signals in a three-area IEEE 39-bus system. %
\end{abstract}

\section{Introduction} 
\label{sec:intro}

The digital transformation of our power system does not only lead to better observability, flexibility and efficiency, but also introduces a phenomenon that is new to power system controls: cyber security threats. NIST \cite{NIST2018} defines five functions for protecting Information and Communication Technology (ICT): (i) Identify, (ii) Protect, (iii) Detect, (iv) Respond, (v) Recover. It would be naive to think an ICT system can be perfectly protected  in order to address the issues raised by (iii)-(v). This paper focuses on (iii) Detection for supervisory control and data acquisition (SCADA) systems, which are in charge of transmitting measurement and control signals between power system substations and control centers. 
Such SCADA systems are notorious for being based on legacy ICT, and are a popular target for adversaries \cite{Gorman2009, ChenAbu-Nimeh2011} nowadays. The consequences of a successful attack on SCADA systems can be catastrophic to an economy and society in general \cite{mohajerin2010cyber,Liang2017a}. In this light, it is of utmost importance to detect these attacks and respond accordingly. Notably, if the malicious attacks can be detected sufficiently fast, the corrupted signals can be disconnected or corrected by resilient controls, preventing further severe damage \cite{Tiniou2013}.%

\paragraph*{\bf Literature on anomaly detection}
Traditionally, SCADA systems deploy bad data detection~(BDD) to filter out possible erroneous measurements due to sensor failures or anomalies~\cite{Teixeira2010}. 
The BDD process captures only a snapshot of the steady states of system trajectories, and thus only exploits possible {\em static} impact of intrusions. Although this method can perform successfully in detecting basic attacks, it may fail in the presence of the so-called {\em stealthy multivariate attacks} that carefully launch synthesized false data injections given full knowledge of the system model~\cite{Hug2012}.%

It was first explored in \cite{LiuNingReiter2009} that such an attack can perturb the state estimation function without triggering alarms in BDD. Since then vulnerability and impact analysis of stealthy attacks on power systems have been a prominent subject in the literature. A typical notion to quantify the vulnerability to stealthy attacks is directly concerned with the level of efforts required to alter specific measurements \cite{Giani2013, Pan2018}. Without advanced diagnosis tools, tampering measurements remains undetected, causing state deviations, equipment damages or even cascading failures~\cite{Liang2016}. Techniques proposed to deal with stealthy attacks include statistical methods such as sequential detection using Cumulative Sum (CUSUM)-type algorithms~\cite{Li2015}, and measurements consistency assessment under certain observability assumptions~\cite{Zhao2018}. A detection method that leverages online information is described in \cite{Ashok2018}, which is applicable by ensuring the availability and accuracy of load forecasts and generation schedules. In \cite{Liu2014a}, a mechanism is introduced to formulate the detection scheme as a matrix separation problem, but it only recovers intrusions among corrupted measurements over a particular period of time.%

These techniques are essentially static detection methods that may be confined by certain prior assumptions on the distribution of measurement errors. Despite an extensive and ongoing literature focusing on the static part of BDD mechanism, the following question remains largely unexplored:
\begin{flushleft}
	\centering
	{\em Would it be possible to detect stealthy multivariate attacks in a real-time operation by exploiting the attack impact on the dynamics of system trajectories during the transient?}
\end{flushleft} 
The importance of an appropriate answer to this question has been reinforced thanks to recent advances in sensing technology in the modern power systems. Our main objective in this article is to address this question.%

\paragraph*{\bf Related work} 
Detection methods concerning system dynamics have primarily emerged under the topic of {\em fault detection and isolation filters}. A subclass of these schemes is the observer-based approach applied initially to linear models~\cite{Massoumnia1989}; see also \cite{Ding2008} for a comprehensive summary of the large body of literature. The authors in~\cite{Nyberg2006} further extend the modeling framework to general linear differential-algebraic equations (DAEs), enhancing the applicability of such methods particularly for power system applications due to the common governing physical laws in this setting. Recently, a variant of observer-based methods is also investigated in~\cite{Ameli2018a} so as to deal with unknown natural exogenous inputs.%

An inherent shortcoming of many observer-based approaches is that the degree of the resulting diagnosis filter is effectively the same as the system dynamics, which may yield an unnecessarily complex filter in large-scale power systems. To our best of knowledge, there are relatively much fewer studies in the literature on the design of the reduced-order observers where the conditions for a minimum order existence need to be satisfied \cite{Ding2008, Gao2017}. The closest approach in the literature is~\cite{Esfahani2016} where a scalable optimization-based filter design is developed for high-dimensional nonlinear control systems. However, the proposed method opts for mainly dealing with a single fault scenario, and may not be as effective in case of smart multivariate adversarial inputs. 

An effective approach toward security and modeling the interaction between attackers and detectors builds on the rich framework of game theory. Recently, the authors in~\cite{Shukla2019} propose a two player mixed strategy game to address a dynamic resource-planning problem between an attacker targeting the communication equipment and a defender protecting the control network. Similar frameworks have also been deployed to model the dynamics of information flow between an advanced persistent threat and a detector \cite{Sahabandu2018, Sahabandu2019}.

\paragraph*{\bf Our contributions} 
The main objective of this article is to develop a {\em diagnosis filter} to detect {\em multivariate data injection attacks} in a real-time operation. For this purpose, considering a class of disruptive multivariate attack scenarios (Definition~\ref{def:disrupt_attack}), we first characterize the attack impact on power system dynamics through a set of differential equations. Having transferred the dynamics into the discrete-time domain, we further restrict the diagnosis filter to a family of dynamic residual generators that entirely decouples the contributions of the attacks from the system states and natural disturbances. In order to identify an admissible multivariate attack scenario, we propose an optimization-based framework to robustify the diagnosis filter with respect to such attacks, i.e., aiming to design a filter whose residual (output) is sensitive to any plausible disruptive multivariate attacks. The main contributions of this article are as follows:
\begin{itemize}
	\item[(i)] Unlike the existing literature, we go beyond a static viewpoint of anomaly detection to capture the attack impact on the dynamics of system trajectories. To this end, we characterize the diagnosis filter design approach as a robust optimization program. It is guaranteed that while the filter residual is decoupled from system states and disturbances, it still remains sensitive to all admissible disruptive multivariate attacks even if the attacker has full knowledge about the diagnosis filter architecture (Definition~\ref{def:robust_detect} and the program~\eqref{opt:max-min-opt}). %
	
	\item[(ii)] To detect attacks during the transient behavior, we reformulate the resulting robust program as a finite, possibly non-convex, optimization program~(Theorem~\ref{the:max-min-reform}). To improve the scalability of the proposed solution, we further propose a linear programming relaxation which is highly tractable for large scale systems~(Corollary~\ref{cor:convex_rel}). It is guaranteed that if the optimal value of the relaxed program is positive, the resulting diagnosis filter is able to detect any admissible disruptive attack scenarios, which may remain stealthy through the lens of a static detector.%
	
	\item[(iii)] We further explore the steady-state behavior of the diagnosis filter in the presence of a plausible attack scenario (Lemma~\ref{lem:robust_scheme_nz}). In this case, we develop an exact convex reformulation of the resulting robust program. As a byproduct, we show that the proposed solution is indeed a Nash equilibrium (saddle point) between the attacker and the residual generator (Theorem~\ref{the:max-min-reform-nz-ne}). An interesting implication of such a Nash equilibrium is that the information of the attack signal may not necessarily improve the performance of the diagnosis filter. In other words, if the proposed convex optimization fails to have a desirable feasible solution, it then implies that there exists a disruptive stealthy attack where the exact knowledge of the attack signal still does not help design a successful residual generator.%
\end{itemize}

In addition to the above theoretical results, we validate the performance and effectiveness of the proposed diagnosis filter on a multi-area IEEE 39-bus system. Numerical results illustrate that the diagnosis filter successfully generates a residual ``alert'' in the presence of multivariate attacks that are stealthy in a static viewpoint, even in a noisy environment with imprecise measurements. %

Section~\ref{sec:prob} introduces the problem of power system cyber security, and the challenges posed by multivariate attacks are highlighted. Section~\ref{sec:powersys_model} discusses a model instance of power system dynamics under attacks on measurements. Our diagnosis filter design is proposed in Section~\ref{sec:detect} where an optimization framework is introduced, and numerical simulations are reported in Section~\ref{sec:results}. %

\paragraph*{\bf Notation}
The symbols $\mathbb{R}$, $\mathbb{N}$ represent the set of real numbers and integers, respectively. Given a matrix $A \in \mathbb{R}^{m \times n}$, $A^\top$ denotes its transpose, and the space $\mbox{Im}(A)$ represents its range space. Throughout the paper, the matrix $I$ is the identity matrix with an appropriate dimension. Given a column vector~$a \in \mathbb{R}^{m}$, $\mbox{diag}(a)$ denotes an $m \times m$ diagonal matrix with the elements of vector $a$ sitting on the main diagonal and the rest of the elements being zero. We also denote by $\mbox{diag}[A_1, \ A_2, \ \dots, \ A_k]$ a block matrix whose main diagonal elements are the matrices $A_1, \, A_2, \, \dots, \, A_k$. Given a vector~$a \in \mathbb R^m$, the associated $\ell_{\infty}-$norm is denoted by $\| a \|_{\infty} = \max_{i\le m}|a_{i}|$. %

\section{Problem Statement}\label{sec:prob}

\subsection{Static detection and system modeling} \label{subsec:basic}

\begin{figure}[t!p]
	\centering
	\includegraphics[scale=1.25]{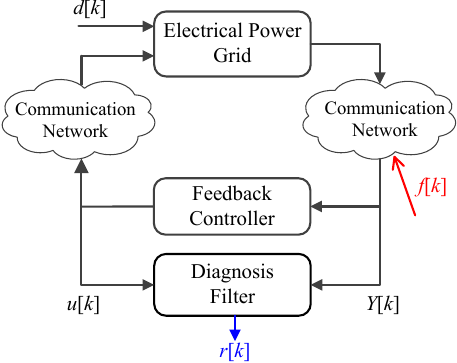}
	\caption{Schematic block diagram of the system model.}
	\label{fig:sys_schem}
\end{figure}

For a power grid, measurements are collected by remote sensors and transmitted through a SCADA network. The typical BDD is conducted to detect the erroneous measurements at each time instance. We can see this as a static process: it only concerns the system states $X[k] \in \mathbb{R}^{n_{X}}$ and measurements $Y[k] \in \mathbb{R}^{n_{Y}}$ at time step $k \in \mathbb{N}$, which can be described by 
\begin{equation}\label{eq:static_sys}
Y[k] =   CX[k] + D_{f}f[k],
\end{equation} 
where $C \in \mathbb{R}^{n_{Y}\times n_{X}}$ is the measurement matrix, and $f[\cdot] \in \mathbb{R}^{n_{f}}$ represents the data injection attacks on measurements. Note that the matrix $D_{f}$ characterizes which measurement is vulnerable to attacks. It is customary to define a {\em residual signal} for a static detector,  $r_{S}[k] := Y[k] - \hat{Y}[k]$, where $\hat{Y}[\cdot]$ denotes the estimated measurements. In the traditional weighted least squares estimation, the estimate of state is $({C}^\top{C})^{-1}{C}^\top Y[k]$, assuming that $C$ has full column rank with high measurement redundancy. Then the measurements estimate is $C({C}^\top{C})^{-1}{C}^\top Y[k]$, and the residual signal 
can be further expressed as
\begin{equation}\label{eq:static_r}
r_{S}[k] = \big(I - C({C}^\top{C})^{-1}{C}^\top \big)Y[k].
\end{equation} 

Such an anomaly detector has shown a good effectiveness in detecting erroneous data and basic attacks \cite{Deng2018}. However, in the face of coordinated attacks on multiple measurements, this static detector can fail. In this article, motivated by this shortcoming, we take a dynamic design perspective where we shift the emphasis on an attack as a static process to its effects on power system dynamics. In particular, we opt for differentiating the attack impact on the systems trajectories from natural disturbances such as load deviations. %

To model its impact on the dynamics, let us consider a more general modeling framework in Figure~\ref{fig:sys_schem}. The electrical grid is operated by a digital controller that receives measurements as inputs and sends control signals to the actuators through communication networks. These transmitted data are applied in discrete-time samples. On the power grid side, the input $d[k] \in \mathbb{R}^{n_{d}}$ represents natural disturbances. On the controller side, a control signal $u[k] \in \mathbb{R}^{n_{u}}$ is computed given the measurements $Y[k]$. Note that with the closed-loop control, the corruptions $f[k]$ on the measurements would affect the system dynamics. The dynamics of the closed-loop system is
\begin{equation}\label{eq:dis_sys}
\left\{
\begin{aligned}
& X[k+1] = A_{x}X[k] + B_{d}d[k] + B_{u}u[k], \\
& Y[k] =   CX[k] + D_{f}f[k], 
\end{aligned}
\right.
\end{equation}
where $A_{x}$, $B_{d}$ and $B_{u}$ are constant matrices. Let us highlight the difference between the dynamical system~\eqref{eq:dis_sys} and the respective static counterpart~\eqref{eq:static_sys}. In fact, the time independence of the first equation in~\eqref{eq:dis_sys} describes the dynamics of the system, while the algebraic equation~\eqref{eq:static_sys} represents the relation on each time instance and describes a static relation between the states and outputs. The aim of this study is to exploit such dynamics information in~\eqref{eq:dis_sys} in order to design a diagnosis filter to detect stealthy multivariate attacks. To illustrate the attack impact on the system dynamics, we can simply consider the feedback controller as a linear operator such that $u[k] = GY[k]$ where $G\in \R^{n_{u}\times n_Y}$ is a matrix gain. By defining the closed-loop system matrices $A_{cl}:= A+B_{u}GC$ and $B_{f}:=B_{u}GD_{f}$, we can reformulate \eqref{eq:dis_sys} into
\begin{equation}\label{eq:dis_cls}
\left\{
\begin{aligned}
& X[k+1] = A_{cl}X[k] + B_{d}d[k] + B_{f}f[k], \\
& Y[k] =   CX[k] + D_{f}f[k]. \\
\end{aligned}
\right.
\end{equation}
\begin{Rem}[Dynamic feedback controller]\label{rem:dyn_feedback_ctrl}
	The restriction to only a static feedback controller~$u[k]=GY[k]$ to transfer from \eqref{eq:dis_sys} to \eqref{eq:dis_cls} is without loss of generality. Namely, the proposed framework is rich enough to subsume a dynamic controller architecture as well. Indeed, when the controller has certain dynamics, it suffices to augment the system dynamics~\eqref{eq:dis_sys} with the controller states and outputs. We refer to Appendix~\ref{subsec:app_rem_dyn_ctrl}, for such a detailed analysis. \par
\end{Rem}
\begin{Rem}[Attacks impact on the dynamics of system trajectories]\label{rem:att_affect_dyn}
	In light of \eqref{eq:dis_cls}, matrices $B_{f}$, $D_{f}$ capture the attack impact on the power system dynamics, mapping attacks $f[\cdot]$ to the system states and measurements respectively. \par
\end{Rem}

In the following, we show that the state-space description \eqref{eq:dis_cls} is a particular case of DAE model. By introducing a time-shift operator $q$ : $qX[k] \rightarrow X[k+1]$, one can fit \eqref{eq:dis_cls} into 
\begin{equation}\label{eq:dae}
H(q)x[k] + L(q)y[k] + F(q)f[k] = 0,
\end{equation}
where $x:= [X^\top \ d^\top]^{\top}$ represents the unknown signals of system states and disturbances; $y := Y$ contains all the available data for the operator. Let $n_{x}$ and $n_{y}$ be the dimensions of $x[\cdot]$, $y[\cdot]$. We denote $n_{r}$ as the number of rows in \eqref{eq:dae}. Then $H, \ L, \ F$ are polynomial matrices in terms of the time-shift operator $q$ with $n_{r}$ rows and $n_{x}, n_{y}, n_{f}$ columns separately, by defining,
\begin{equation}\label{eq:dae_def}
H(q) := \left[\begin{matrix} -qI + A_{cl} & B_{d}\\ C & 0\\ \end{matrix}\right], \quad L(q) := \left[\begin{matrix} 0 \\ -I \\ \end{matrix}\right], \quad F(q) := \left[\begin{matrix} B_{f}\\ D_{f} \\ \end{matrix}\right]. \nonumber 
\end{equation}

\subsection{Challenge: multivariate attacks}\label{subsec:challenge}

We start this subsection with an existing result characterizing the set of stealthy multivariate attacks that can bypass the static detector. %
\begin{Lem}[{Stealthy attack values~\cite[Theorem 1]{LiuNingReiter2009}}]\label{lem:stealth set}
	Consider the measurement equation~\eqref{eq:static_sys} and the static detector with the respective residual function~\eqref{eq:static_r}. Then, an attack $f[\cdot]$ remains stealthy, i.e., it does not cause any additional residue to~\eqref{eq:static_r}, if it takes values from the set
	\begin{equation}\label{eq:att_sp}
	\mathcal{F} \Let \big\{ f[k] \in \mathbb{R}^{n_{f}}: \ D_{f}f[k] \in \im (C), \quad k \in \N \big\},
	\end{equation}
\end{Lem}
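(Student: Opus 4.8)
The plan is to compute the static residual \eqref{eq:static_r} explicitly on a measurement stream $Y[\cdot]$ generated by \eqref{eq:static_sys} with a nonzero attack, and to isolate the component contributed by the attack. First I would substitute $Y[k] = CX[k] + D_f f[k]$ into \eqref{eq:static_r} to obtain
\begin{equation*}
r_S[k] = \big(I - C(C^\top C)^{-1}C^\top\big)CX[k] + \big(I - C(C^\top C)^{-1}C^\top\big)D_f f[k].
\end{equation*}
The crucial structural fact is that $P \Let I - C(C^\top C)^{-1}C^\top$ is the orthogonal projector onto $\im(C)^\perp$ --- it is well defined because the full-column-rank / measurement-redundancy assumption stated just after \eqref{eq:static_r} makes $C^\top C$ invertible --- and therefore $PC = 0$. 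Hence the first term drops out and $r_S[k] = P D_f f[k]$; in particular, with $f \equiv 0$ the residual vanishes identically, so the ``additional residue'' due to the attack is precisely $P D_f f[k]$.

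Second, I would characterize when this additional residue is zero at every time step. Because $P$ is the orthogonal projection onto $\im(C)^\perp$, one has $Pv = 0$ if and only if $v \in \im(C)$. Setting $v = D_f f[k]$ shows that the attack produces no residue, i.e. $r_S[k]$ coincides with its attack-free value for all $k$, exactly when $D_f f[k] \in \im(C)$ for every $k \in \N$ --- which is the defining condition of the set $\mathcal{F}$ in \eqref{eq:att_sp}. This proves the claimed sufficiency (and, incidentally, the equivalence).

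I do not anticipate a genuine analytical obstacle; the only points needing a word of care are (i) invoking the measurement-redundancy assumption so that $P$ is a bona fide orthogonal projector, and (ii) pinning down the meaning of ``does not cause any additional residue,'' which we read as: the residual evaluated on the attacked measurements equals the residual on the corresponding attack-free measurements. As a minor addendum I would note that the conclusion is unaffected by additive measurement noise, since such noise enters $r_S$ through the same projector $P$ and independently of $f$, so the attack's marginal contribution to the residual is still $P D_f f[k]$.
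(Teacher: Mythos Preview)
Your argument is correct and is the standard one: expand \eqref{eq:static_r} via \eqref{eq:static_sys}, use that $P = I - C(C^\top C)^{-1}C^\top$ is the orthogonal projector onto $\im(C)^\perp$ (so $PC=0$), and conclude that the attack-induced residue $PD_f f[k]$ vanishes iff $D_f f[k]\in\im(C)$. The paper itself does not give a proof of this lemma; it simply cites \cite[Theorem~1]{LiuNingReiter2009}, so there is nothing further to compare.
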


One can observe that a stealthy attack~$D_{f}f[\cdot]$ described in \eqref{eq:att_sp} has the knowledge of the system model~\eqref{eq:static_sys} through the range space of $C$. That is, it represents a tampered value~$D_{f}f[k] = C \Delta X$ where $\Delta X \in \mathbb{R}^{n_{X}}$ can be any injected bias influencing certain sensor measurements. Such multivariate attacks would also challenge the detector design as they may neutralize the diagnosis filter outputs.

\begin{As}[Stationary attacks]\label{ass:station_att}
	Throughout this article, we consider attacks~$f[\cdot]$ that are time-invariant, i.e., $f[k] = 0$ for all $k \le k_{\min}$; $f[k]= f \in \mathcal{F}$ for all $k > k_{\min}$. Namely, the attack occurs as a constant bias injection~$f$ on measurements during the system operations at a specific unknown time instance $k_{\min}$, and it remains unchanged since then. 
\end{As}

Advanced attacks also pursue a maximized impact on the system dynamics. Thus, an adversary would try to inject ``{\em smart}" false data, possibly with large magnitudes, in such a way that it causes the maximum damage. The next definition opts to formalize this class of attacks.
\begin{Def}[Disruptive stealthy attack] \label{def:disrupt_attack}
	Consider a set of vectors~$\basis \Let [f_{1}, f_{2}, \dots, f_{d}]$ representing a finite basis for the set of stealthy attacks~\eqref{eq:att_sp}, i.e., the set~$\mathcal{F}$ defined in \eqref{eq:att_sp} can equivalently be represented by
	\begin{align*}
	\mathcal{F} = \left\{\basis^\top\alpha =  \sum_{i=1}^{d}\alpha_i f_i ~\Big |~  \alpha = [\alpha_{1}, \alpha_{2}, \cdots, \alpha_{d}]^\top \in \R^d \right \} \,.
	\end{align*}
	We call a signal~$f\in\mathcal F$ {\em disruptive stealthy attack} if its corresponding coefficients~$\alpha$ is a polytopic set, i.e., it belongs to
	\begin{equation}\label{eq:alpha_set}
	\mathcal{A} : = \big\{ \alpha \in \mathbb{R}^{d} \ | \ A\alpha \geq b  \big\},
	\end{equation}
	where $A \in \R^{n_{b}\times d}$ and $b \in \R^{n_{b}}$ are given matrices. We emphasize that the subsequent analysis and the proposed diagnosis filter design only rely on the convexity of the set~$\mathcal{A}$. Namely, the choice~\eqref{eq:alpha_set} may be adjusted according to the application at hand, as long as the convexity of the set is respected. 
\end{Def}
%

\section{Cyber Security of Power Systems: AGC modeling} \label{sec:powersys_model}

In this section, we first go through a modeling instance of power system dynamics in the form of \eqref{eq:dis_cls}: Automatic Generation Control (AGC) closed-loop system under attacks. This model will be used to validate our diagnosis filter. Figure~\ref{fig:39bus} depicts the diagram of a three-area IEEE 39-bus system. AGC is a feedback controller that tunes the setpoints of participated generators (e.g., G11 of Area 1) to maintain the frequency as its nominal value and the tie-line (e.g., L1-2 between Area 1 and 2) power as the scheduled one. %

\begin{figure}[t!p]
	\centering
	\includegraphics[scale=0.56]{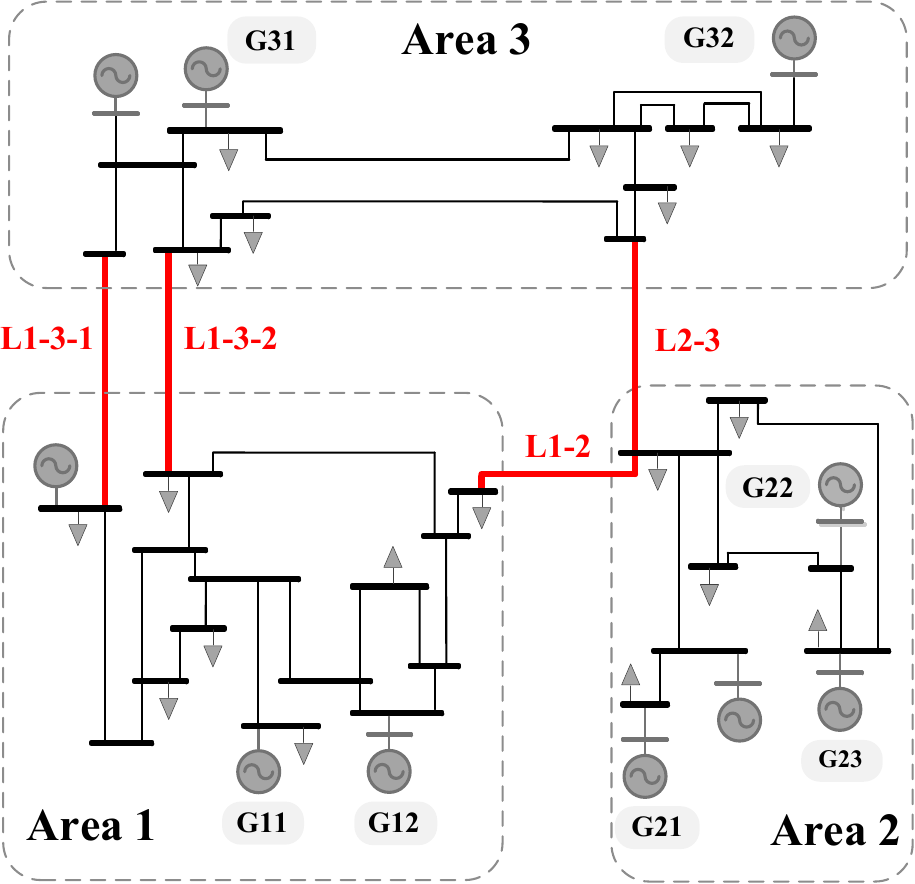}
	\caption{Three-area 39-bus system: the measurements of the tie-lines (in red) L1-3, L1-2, L2-3 are attacked.}
	\label{fig:39bus}
\end{figure}

In the work of AGC, a linearized model is commonly used for the load-generation dynamics \cite{Rakhshani2017}. For a three-area system, the frequency dynamics in Area $i$ can be written as
\begin{subequations}\label{eq:dynamics}
	\begin{align}\label{eq:freq}
	\Delta \dot{\omega}_{i} = \frac{1}{2H_{i}}(\Delta P_{m_{i}} - \Delta P_{tie_{i}}  -\Delta P_{l_{i}} - D_{i} \Delta \omega_{i}), 
	\end{align}
	where $H_{i}$ is the equivalent inertia constant; $D_{i}$ is the damping coefficient and $\Delta P_{l_{i}}$ denotes load deviations. Here $\Delta P_{tie_{i}}$, $\Delta P_{m_{i}}$ represent the total tie-line power exchanges from Area $i$ and the total generated power in Area $i$, i.e., $\Delta P_{tie_{i}} = \sum_{j \in \mathcal{E}_{i}} \Delta P_{tie_{i,j}}$ where $\mathcal{E}_{i}$ denotes the set of areas that connect to Area $i$, and $\Delta {P}_{m_{i}} = \sum_{g=1}^{G_{i}} \Delta {P}_{m_{i,g}}$ where $G_{i}$ denotes the number of participated generators in Area $i$, and we have %
	\begin{align} 
	\Delta \dot{P}_{m_{i,g}} &= -\frac{1}{T_{ch_{i,g}}}(\Delta P_{m_{i,g}} + \frac{1}{S_{i,g}} \Delta \omega_{i} - \phi_{i,g}\Delta P_{agc_{i}}), \\
	\Delta \dot{P}_{tie_{i,j}} &=  T_{ij} (\Delta \omega_{i} - \Delta \omega_{j}),
	\end{align}
	where $T_{ch_{i,g}}$ is the governor-turbine's time constant; $S_{i,g}$ denote the droop coefficient; $T_{ij}$ is the synchronizing parameter between Area $i$ and $j$. Note that $\Delta P_{agc_{i}}$ is the signal from AGC for the participated generators to track the load changes, and $\phi_{i,g}$ is the participating factor, i.e., $\sum_{g=1}^{G_{i}} \phi_{i,g} =1$.
	After receiving the frequency and tie-line power measurements, the {\em area control error} (ACE) is computed for an integral action,
	\begin{align} 
	ACE_{i} &= {B}_{i} \Delta \omega_{i} + \sum_{j \in \mathcal{E}_{i}} \Delta P_{tie_{i,j}}, \\
	\Delta \dot{P}_{agc_{i}} &=  - K_{I_{i}} ACE_{i}, \label{eq:ace_agc} %
	\end{align}
\end{subequations}
where $B_{i}$ is the frequency bias and $K_{I_{i}}$ represents the integral gain. %
Based on the equations \eqref{eq:dynamics}, the linearized model of Area $i$ can be presented as the state equation 
\begin{equation}\label{eq:spx_areai}
\dot{X}_{i}(t) = A_{ii} X(t) + B_{i,d}d_{i}(t) + \sum_{j \in \mathcal{E}_{i}} A_{ij} X_{j}(t), \\
\end{equation}
where $X_{i}$ is the state vector; $d_{i} := \Delta P_{l_{i}}$ denotes load deviations. Recall Remark~\ref{rem:dyn_feedback_ctrl} that \eqref{eq:spx_areai} is an augmented model for the closed-loop AGC system that $X_{i}$ consists of not only the electrical grid states (e.g., frequency, generator output and tie-line power) but also the controller state $\Delta {P}_{agc_{i}}$, i.e.,
\begin{equation}\label{eq:spx_xi}
X_{i} := \left[\begin{matrix} \{\Delta P_{tie_{i,j}}\}_{j \in \mathcal{E}_{i}} & \Delta \omega_{i} & \{\Delta {P}_{m_{i,g}}\}_{1:G_{i}} & \Delta P_{agc_{i}} \end{matrix}\right]^{\top}. \nonumber
\end{equation}
Besides in \eqref{eq:spx_areai}, $A_{ii}$ is the system matrix of Area $i$; $A_{ij}$ is a matrix whose only non-zero element is $-T_{ij}$ in row 1 or 2 and column 3;  $B_{i,d}$ is the matrix for load deviations. 

In addition to $\eqref{eq:spx_areai}$, we assume a measurement model with high redundancy that the measurements of each tie-line power ($\Delta P_{tie_{i,j}}$) and the total tie-lines' power ($\Delta P_{tie_{i}}$), the frequency ($\Delta \omega_{i}$), each generator output ($\Delta P_{m_{i,g}}$) and the total generated power ($\Delta P_{m_{i}}$), and the AGC controller output ($\Delta {P}_{agc_{i}}$) are all available. Besides, vulnerabilities within SCADA networks may allow cyber intrusions. Thus the output equation is
\begin{equation}\label{eq:spy_areai_f}
Y_{i}(t) = C_{i}X(t) +  D_{i,f}f_{i}(t),
\end{equation}
where $Y_{i}$ is the system output and $C_{i}$ is the output tall-matrix with full column rank. Here $f_{i}$ denotes multivariate attacks and the matrix $D_{i,f}$ quantifies which output is attacked. In the aforementioned section, due to the feedback loop, attacks on the measurements would also affect the frequency dynamics. Hence the state equation \eqref{eq:spx_areai} during attacks becomes
\begin{equation}\label{eq:spx_areai_f}
\dot{X}_{i}(t) = A_{ii} X(t) + B_{i,d}d_{i}(t) + B_{i,f}f_{i}(t) + \sum_{j \in \mathcal{E}_{i}} A_{ij} X_{j}(t), \nonumber\\
\end{equation}
where $B_{i,f}$ is the matrix that relates attacks to system states. 

Using the state equations of each area, the continuous-time model of the three-area %
system can be obtained,
\begin{equation}\label{eq:spx_39}
\dot{X}(t) = \tilde{A}_{cl} X(t) + \tilde{B}_{d}d(t) + \tilde{B}_{f}f(t), \\
\end{equation}
where $X$ is the vector consisting of groups of dynamic states in each area; $d$ is the vector for all areas' load deviations; $f$ denotes all the attack signals in the three-area, namely,
\begin{gather}\label{eq:spx_39_x&d&f}
X= \left[\begin{matrix} X_{1}^{\top} & X_{2}^{\top} & X_{3}^{\top} \end{matrix}\right]^{\top}, \quad d = \left[\begin{matrix} \Delta P_{l_{1}} & \Delta P_{l_{2}} & \Delta P_{l_{3}}\end{matrix}\right]^\top, \quad f= \left[\begin{matrix} f_{l}^\top & f_{2}^\top & f_{3}^\top \end{matrix}\right]^\top. \nonumber
\end{gather} 
In \eqref{eq:spx_39}, $\tilde{A}_{cl}$ is the closed-loop system matrix; $\tilde{B}_{d}$, $\tilde{B}_{f}$ are constant matrices that relate load deviations and attacks to system states. For the three-area system, these matrices are
\begin{gather}\label{eq:spx_ABdBf}
\tilde{A}_{cl} = \left[\begin{matrix} A_{11} & A_{12} & A_{13} \\ A_{21} & A_{22} & A_{23} \\  A_{31} & A_{32} & A_{33} \end{matrix}\right], \quad 
\tilde{B}_{d} = \mbox{diag} \left[\begin{matrix}B_{1,d}, \ B_{2,d}, \ B_{3,d} \end{matrix}\right]\, , \quad \tilde{B}_{f} = \mbox{diag} \left[\begin{matrix}B_{1,f}, \ B_{2,f}, \ B_{3,f} \end{matrix}\right] \, . \nonumber
\end{gather}
We can also obtain the output equation of the system,
\begin{equation}\label{eq:spy_39}
Y(t) = CX(t) + D_{f}f(t),
\end{equation}
where $Y$ is the system output vector containing all the three areas' outputs; $C$ is the output matrix; $D_{f}$ quantifies all the vulnerable signals. Similarly, these matrices are
\begin{gather}\label{eq:spx_39_y&c&df}
Y= \left[\begin{matrix} Y_{1}^\top & Y_{2}^\top & Y_{3}^\top \end{matrix}\right]^\top, \quad C = \mbox{diag} \left[\begin{matrix}C_{1}, \ C_{2}, \ C_{3} \end{matrix}\right], \quad D_{f} = \mbox{diag} \left[\begin{matrix}D_{1,f}, \ D_{2,f}, \ D_{3,f} \end{matrix}\right]. \nonumber
\end{gather}

To obtain the sampled discrete-time model as \eqref{eq:dis_cls}, \eqref{eq:spx_39} and \eqref{eq:spy_39} must be discretized. We deploy a zero-order hold (ZOH)\footnote{The inputs signals $d(\cdot)$ and $f(\cdot)$ in \eqref{eq:spx_39} are assumed to be piecewise constant within the sampling periods.} discretization for a given sampling period $T_{s}$ \cite{Ogata1995}, 
\begin{equation}\label{eq:zoh}
A_{cl} = e^{\tilde{A}_{cl}T_{s}}, \quad B_{d} = \int_{0}^{T_s} e^{\tilde{A}_{cl}(T_{s}-t)}\tilde{B}_{d} \mathrm{d} t. \\
\end{equation}
Note that the attack matrix $\tilde{B}_{f}$ has the same matrix transformation as $\tilde{B}_{d}$, resulting $B_{f}$. The above approximation is exact for a ZOH and \eqref{eq:zoh} corresponds to the analytical solution of the discretization. Therefore, the above model can be described in the form of \eqref{eq:dis_cls} which again can be fitted into the DAE \eqref{eq:dae}. In Appendix~\ref{subsec:agc_39_sys_matrices}, we provide the detailed description of the involved parameters of the three-area 39-bus system as well as the attack scenarios on the AGC measurements. %

\section{Robust Dynamic Detection} \label{sec:detect}

\subsection{Preliminaries for diagnosis filter construction}\label{subsec:pre_att_detect}
An ideal detection aims to implement a non-zero mapping from the attack to the diagnostic signal while decoupled from system states and disturbances, given the available data $y[\cdot]$ in the control center. In the power system dynamics described via a set of DAE, we restrict the diagnosis filter to a type of dynamic residual generator in the form of linear transfer functions, i.e., $r_{D}[k] := R(q)y[k]$ where $r_{D}$ is the residual signal of the diagnosis filter and $R(q)$ is a transfer operator. Note that $y[\cdot]$ is associated with the polynomial matrix $L(q)$ in \eqref{eq:dae}. We propose a formulation of transform operator $R(q)$ as
\begin{equation}\label{filter}
R(q) := a(q)^{-1}N(q)L(q), \nonumber
\end{equation}
where $N(q)$ is a polynomial vector with the dimension of $n_{r}$ and a predefined order $d_{N}$. To make $R(q)$ physically realizable, stable dynamics $a(q)$ with sufficient order need to be added as the denominator where all the roots are strictly contained in the unit circle. Note that, unlike the observer-based methods, here $d_{N}$ can be much less than the dimension of system dynamics. Then $N(q)$ and $a(q)$ are the two variables for a diagnosis filter design. By multiplying $a(q)^{-1}N(q)$ in the left of \eqref{eq:dae}, we have
\begin{equation}\label{eq:residual_gen}
\begin{aligned}
r_{D}[k] &= a(q)^{-1}N(q)L(q)y[k] = -\underbrace{a(q)^{-1}N(q)H(q)x[k]}_{\text{(\RNum{1}})} - \underbrace{a(q)^{-1}N(q)F(q)f[k]}_{\text{(\RNum{2}})},
\end{aligned}
\end{equation}
where term $\text{(\RNum{1}})$ in \eqref{eq:residual_gen} is due to $x[\cdot]$ of system states and natural disturbances. Term $\text{(\RNum{2}})$ is the desired contribution from the attacks $f[\cdot]$. In view of this diagnosis filter description, we introduce a class of residual generator which is sensitive to disruptive stealthy attacks as defined in Definition~\ref{def:disrupt_attack}. 

\begin{Def}[Robust residual generator]\label{def:robust_detect}
	Consider a linear residual generator represented via a polynomial vector~$N(q)$. This residual generator is robust with respect to disruptive stealthy attacks introduced in~Definition~\ref{def:disrupt_attack} if
	\begin{align}
	\label{eq:robust-poly}
	\left\{
	\begin{array}{ll}
	(I) &  N(q)H(q) = 0, \\ 
	(II) & N(q)F(q)\basis \alpha \neq 0, \quad \forall \alpha \in \mathcal{A}, 
	\end{array}
	\right.
	\end{align}
	where the basis matrix~$\basis$ and the set $\mathcal A$ are the same as the ones in Definition~\ref{def:disrupt_attack}. 
\end{Def}

In the next step, we show that the polynomial equations~\eqref{eq:robust-poly} in Definition~\ref{def:robust_detect} can be characterized as a feasibility problem of a finite robust program. 

\begin{Lem}[Linear program characterization]\label{lem:robust_scheme_reform}		
	Consider the polynomial matrices $H(q) = \sum_{i = 0}^{1}H_{i}q^{i}$, $N(q) := \sum_{i = 0}^{d_{N}}N_{i}q^{i}$ and $F(q) = F$, where $H_{i} \in \mathbb{R}^{n_{r} \times n_{x}}$, $N_{i} \in \mathbb{R}^{n_{r}}$, and $F \in \R^{n_{r} \times n_{f}}$ are constant matrices. Then, the family of robust residual generators in~\eqref{eq:robust-poly} is characterized by
	\begin{align}\label{eq:robust-poly_bar}
	\left\{
	\begin{array}{ll} 
	(I) & \bar{N}\bar{H} = 0, \\ 
	(II)& \big\| \bar{N}V(\alpha) \big \|_{\infty} > 0, \quad \forall \alpha \in \mathcal{A},
	\end{array}
	\right. 
	\end{align}
	where $\|\cdot\|_{\infty}$ denotes the infinite vector norm, and
	\begin{align*}
	\bar{N} \Let\left[\begin{matrix} N_{0} & N_{1} &  \cdots & N_{d_{N}} \end{matrix}\right], \quad \bar{H} \Let \left[\begin{matrix} H_{0} & H_{1} & 0 & \cdots & 0 \\ 0 & H_{0} & H_{1} & 0 & \vdots \\ \vdots & 0  & \ddots & \ddots & 0 \\ 0 & \cdots & 0 & H_{0} & H_{1} \end{matrix}\right], \quad 
	V(\alpha) \Let \left[\begin{matrix} F\basis\alpha & 0 & \cdots & 0 \\ 0 & F\basis\alpha & 0 & \vdots \\ \vdots & 0 & \ddots & 0 \\ 0 & \cdots & 0 & F\basis\alpha \end{matrix}\right]. \nonumber
	\end{align*}
	%
\end{Lem}
\begin{proof}
	The proof follows a similar line of arguments as \cite[Lemma~4.2]{Esfahani2016}.
	The key step 
	is to observe that $N(q)H(q) = \bar{N}\bar{H} [I ,\ qI ,\ \cdots ,\ q^{d_N+1}I ]^\top$, and $N(q)F\basis\alpha = \bar{N}V(\alpha) [I ,\ qI ,\ \cdots ,\ q^{d_N}I ]^\top$. The rest of the proof follows rather 
	straightforwardly, and we omit the details for 
	brevity. 
\end{proof}

\subsection{Robust diagnosis filter: transient behavior} \label{subsec:maxmin_detect}

In light of \eqref{eq:robust-poly_bar}, we can define a symmetric set for the design variable $\bar{N}$ of the dynamic residual generator,
\begin{equation}\label{eq:sets_barN}
\mathcal{N} : = \{\bar{N} \in \mathbb{R}^{(d_{N}+1)n_{r}} \ | \ \bar{N}\bar{H} = 0, \  \|\bar{N}\|_{\infty} \leq \eta \}. \\
\end{equation}
The second constraint in the set is added to avoid possible unbounded solutions. To design a robust residual generator, we aim to find an $\bar{N} \in \mathcal{N}$ that for all  $\alpha \in \mathcal{A}$, \eqref{eq:robust-poly_bar} can be satisfied. To this end, a natural reformulation of the residual synthesis is to consider an objective function as the second quantity in~\eqref{eq:robust-poly_bar}  influenced by the parameters~$\mathcal{N}$ and the attacker action~$\alpha$, i.e., $\mathcal{J}(\bar{N},\alpha) := \| \bar{N}V(\alpha) \|_{\infty}$. A successful scenario from an attacker viewpoint is to minimize this objective function given a residual generator. Therefore, we take a rather conservative viewpoint where the attacker may have complete knowledge of the system model and even the residual generator parameters, and exploits it so as to synthesize a stealthy attack. We then reformulate the diagnosis filter 
design as the robust optimization program,
\begin{align}\label{opt:max-min-opt}
\gamma^\star \Let \max\limits_{ \bar{N} \in \mathcal{N} } \ \min\limits_{\alpha \in \mathcal{A} } \  \Big\{ \mathcal{J}(\bar{N},\alpha):= \| \bar{N}V(\alpha) \|_{\infty} \Big\}.
\end{align}

The optimal value~$\gamma^\star$ of the robust reformulation~\eqref{opt:max-min-opt} is indeed an indication whether the attack still remains stealthy in the dynamic setting, i.e., if $\gamma^\star > 0$ then the optimal solution~$\bar{N}^\star$ yields a diagnosis filter in the form of~\eqref{filter} which detects all the admissible attacks introduced in Definition~\ref{def:disrupt_attack}. However, if $\gamma^\star = 0$, then it implies that for any possible detectors (static or dynamic) there exists a stationary disruptive attack that remains stealthy. In the next step, we show that the robust program~\eqref{opt:max-min-opt} can be equivalently reformulated as a finite (non-convex) optimization problem.

\begin{Thm}[Finite reformulation of \eqref{opt:max-min-opt}]\label{the:max-min-reform} 
	The robust optimization~\eqref{opt:max-min-opt} can be equivalently described via the finite optimization program
	\begin{align}\label{opt:max-min-ref}
	&\begin{array}{ccl}
	\gamma^\star = & \max\limits_{\bar{N}, \ \beta, \ \lambda} &  b^{\top}\lambda \\
	& \mbox{s.t.} & \sum\limits_{i=0}^{d_{N}}(\beta_{2i}-\beta_{2i+1})N_{i}F\basis = \lambda^{\top}A, \\
	& & \mathbf{1}^{\top}\beta = 1, \ \beta \geq 0, \\
	& & \bar{N} \in \mathcal{N}, \ \lambda \geq 0, 
	\end{array}
	\end{align}
	where $\beta = [\beta_{0}, \ \beta_{1}, \ \cdots, \ \beta_{2d_{N}+1}]^\top$ is an $ \R^{2d_{N}+2}$-valued auxiliary variable.
\end{Thm}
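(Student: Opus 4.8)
The plan is, for each fixed design $\bar N\in\mathcal N$, to eliminate the inner minimization over the attacker's parameter $\alpha$ in~\eqref{opt:max-min-opt} by linear programming duality, and then to absorb the outer maximization over $\bar N$ into the resulting dual maximization, arriving at the single finite program~\eqref{opt:max-min-ref}. First I would unfold the objective using the block-diagonal structure of $V(\alpha)$ supplied by Lemma~\ref{lem:robust_scheme_reform}: since $V(\alpha)$ stacks $d_N+1$ identical blocks $F\basis\alpha$ on its diagonal, the vector $\bar N V(\alpha)$ has components $N_iF\basis\,\alpha$ for $i=0,\dots,d_N$, so that
\[
\mathcal J(\bar N,\alpha)=\big\|\bar N V(\alpha)\big\|_\infty=\max_{0\le i\le d_N}\big|N_iF\basis\,\alpha\big|.
\]
Introducing an epigraph variable $t$ for this $\ell_\infty$-norm, the inner problem $\min_{\alpha\in\mathcal A}\mathcal J(\bar N,\alpha)$ becomes, for fixed $\bar N$, the linear program of minimizing $t$ over $(\alpha,t)$ subject to $t\ge N_iF\basis\,\alpha$ and $t\ge-N_iF\basis\,\alpha$ for all $i$, together with $A\alpha\ge b$.

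The next step is linear programming strong duality. In the situation of interest $\mathcal A=\{\alpha:A\alpha\ge b\}$ is nonempty (otherwise there is no disruptive attack at all), so this LP is feasible; its value is bounded below by $0$ because $\mathcal J=\|\cdot\|_\infty\ge0$; hence the optimum is attained and coincides with the dual optimum. Assigning multipliers $\beta_{2i}\ge0$ to the constraints $t\ge N_iF\basis\,\alpha$, $\beta_{2i+1}\ge0$ to $t\ge-N_iF\basis\,\alpha$, and $\lambda\ge0$ to $A\alpha\ge b$, the Lagrangian reads
\[
L=t\big(1-\mathbf{1}^{\top}\beta\big)+\Big(\textstyle\sum_{i=0}^{d_N}(\beta_{2i}-\beta_{2i+1})N_iF\basis-\lambda^{\top}A\Big)\alpha+b^{\top}\lambda ,
\]
and forcing the coefficients of the free variables $t$ and $\alpha$ to vanish reproduces exactly the constraints $\mathbf{1}^{\top}\beta=1$ and $\sum_{i=0}^{d_N}(\beta_{2i}-\beta_{2i+1})N_iF\basis=\lambda^{\top}A$ of~\eqref{opt:max-min-ref}, with dual objective $b^{\top}\lambda$. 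Thus $\min_{\alpha\in\mathcal A}\mathcal J(\bar N,\alpha)$ equals the maximum of $b^{\top}\lambda$ over $(\beta,\lambda)$ subject to these two constraints together with $\beta\ge0$, $\lambda\ge0$.

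Finally I would substitute this identity into~\eqref{opt:max-min-opt}; since the only link between $\bar N$ and the dual variables $(\beta,\lambda)$ is the affine equality above while the constraints $\bar N\in\mathcal N$ and $\mathbf{1}^{\top}\beta=1,\ \beta\ge0,\ \lambda\ge0$ decouple, the two layers of maximization merge into the single program over $(\bar N,\beta,\lambda)$ that is~\eqref{opt:max-min-ref}, and compactness of $\mathcal N$ induced by $\|\bar N\|_\infty\le\eta$ ensures $\gamma^\star$ is actually attained. I expect the only delicate point to be the duality step: one must check that the inner LP is neither infeasible nor unbounded, so that replacing the minimum by the dual maximum creates no duality gap or spurious branch — feasibility is the standing nonemptiness of $\mathcal A$, boundedness below is immediate from $\mathcal J\ge0$, and no constraint qualification beyond this is needed for linear programs. (An equivalent route keeps $\alpha$ explicit: express $\|\bar N V(\alpha)\|_\infty$ as $\max_{\beta\in\Delta}\sum_{i=0}^{d_N}(\beta_{2i}-\beta_{2i+1})N_iF\basis\,\alpha$ over the simplex $\Delta\subset\R^{2d_N+2}$, interchange $\min_\alpha\max_\beta$ via Sion's minimax theorem — legitimate since $\Delta$ is convex compact, $\mathcal A$ is convex, and the payoff is bilinear — and then dualize the inner linear program in $\alpha$, which yields the same program.)
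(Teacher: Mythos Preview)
Your proof is correct, and your parenthetical ``equivalent route'' is in fact exactly the argument the paper gives: the paper first linearizes the $\ell_\infty$-norm via the simplex variable $\beta\in\mathcal B$, then invokes a minimax/zero-duality-gap argument (compactness of $\mathcal B$, convexity of $\mathcal A$, bilinearity) to swap $\min_\alpha$ and $\max_\beta$, and only afterwards dualizes the resulting linear program in $\alpha$ alone to introduce $\lambda$.

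Your primary route differs in that you fold both steps into a single LP duality: by passing to the epigraph variable $t$ you make the entire inner problem a linear program in $(\alpha,t)$, and one application of LP strong duality produces $\beta$ and $\lambda$ simultaneously, with the simplex constraint $\mathbf 1^\top\beta=1$ emerging as the stationarity condition in $t$. This is slightly more economical --- you need only feasibility and boundedness of a single LP rather than a separate minimax theorem --- while the paper's two-step argument makes the role of $\beta$ as a norm-linearization variable more transparent and isolates the convex-analytic content of the $\min$--$\max$ swap. Both arrive at~\eqref{opt:max-min-ref} with no gap.
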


\begin{proof}
	See Appendix~\ref{subsec:app_the_proof1}.
\end{proof}

The exact reformulation program~\eqref{opt:max-min-ref} for \eqref{opt:max-min-opt} is unfortunately non-convex due to the bilinearity between the variables $\beta$ and $N_{i}$ in the first constraint. In the following corollary, we suggest a convex relaxation of the program by restricting the feasible set of the variable~$\beta$ to a $2d_N+2$ finite possibilities where $\beta = [0, \ \cdots, \ 1, \ \cdots, \ 0]^\top$ in which the only non-zero element of the vector is the $i$-${\rm th}$ element.
\begin{Cor}[Linear program relaxation]\label{cor:convex_rel}
	Given $i \in \{1, \ \dots, \ 2d_{N}+2\}$, consider the linear program
	\begin{align}\label{opt:max-min-relax}
	&\begin{array}{ccl}
	\gamma^{\star}_{i} := & \max\limits_{\bar{N}, \, \lambda} &  b^{\top}\lambda \\
	& \mbox{s.t.} & (-1)^{i}N_{\lfloor i/2 \rfloor}F\basis = \lambda^{\top}A, \tag{${\rm LP}_{i}$}\\
	& & \bar{N} \in \mathcal{N}, \ \lambda \geq 0,
	\end{array}
	\end{align}
	where ${\lfloor \cdot \rfloor}$ is the ceiling function that maps the argument to the least integer. Then, the solution to the program~\eqref{opt:max-min-relax} is a feasible solution to the exact robust design reformulation~\eqref{opt:max-min-ref}, and $\max_{\{i\le 2d_N+2\}} \gamma^{\star}_{i} \le \gamma^\star$. In particular, if for any $i \in \{1,\ \dots, \ 2d_N+2\}$ we have $\gamma^{\star}_{i} > 0$, then the solution to \ref{opt:max-min-relax} offers a robust residual generator 
	detecting all admissible disruptive attacks introduced by~Definition~\ref{def:disrupt_attack}. 
\end{Cor}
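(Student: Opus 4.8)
The plan is to obtain the linear programs~\eqref{opt:max-min-relax} from the finite, but bilinear, program~\eqref{opt:max-min-ref} of Theorem~\ref{the:max-min-reform} by freezing the auxiliary variable $\beta$ at the vertices of the probability simplex $\{\beta\in\R^{2d_{N}+2}:\mathbf{1}^{\top}\beta=1,\ \beta\geq 0\}$, i.e.\ at the $2d_{N}+2$ standard unit vectors $e_{1},\dots,e_{2d_{N}+2}$. The key observation is that once $\beta=e_{i}$ is fixed, the sum $\sum_{j=0}^{d_{N}}(\beta_{2j}-\beta_{2j+1})N_{j}F\basis$ contains a single nonzero summand, namely one term $\pm N_{\ell}F\basis$ whose index $\ell=\lfloor i/2\rfloor$ and sign $(-1)^{i}$ are dictated by the position and parity of $i$; hence the bilinear equality in~\eqref{opt:max-min-ref} collapses to the linear equality $(-1)^{i}N_{\lfloor i/2\rfloor}F\basis=\lambda^{\top}A$, the constraints $\mathbf{1}^{\top}\beta=1$, $\beta\geq 0$ become vacuous, and $\bar N\in\mathcal{N}$, $\lambda\geq 0$ are untouched. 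What remains is precisely~\eqref{opt:max-min-relax}.

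First I would settle the feasibility/relaxation claim. Fix $i$ and let $(\bar N,\lambda)$ be feasible for~\eqref{opt:max-min-relax}. Then $(\bar N,\ \beta=e_{i},\ \lambda)$ satisfies every constraint of~\eqref{opt:max-min-ref} — its first constraint is exactly the one of~\eqref{opt:max-min-relax} by the observation above, the simplex constraints hold because $e_{i}$ is a vertex, and $\bar N\in\mathcal{N}$, $\lambda\geq 0$ are inherited — with the same objective value $b^{\top}\lambda$. Since~\eqref{opt:max-min-ref} is a maximization, every feasible value lower-bounds $\gamma^{\star}$, so $\gamma'_{i}\leq\gamma^{\star}$, and maximizing over $i\in\{1,\dots,2d_{N}+2\}$ yields $\max_{i\leq 2d_{N}+2}\gamma'_{i}\leq\gamma^{\star}$. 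One obtains only an inequality, not equality, because the feasible set of~\eqref{opt:max-min-ref} in the joint variable $(\bar N,\beta,\lambda)$ is not a polytope: driving $\beta$ to a vertex generally forces $\bar N$ to move.

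Next I would prove the detection guarantee. Suppose $\gamma'_{i}>0$ and let $(\bar N^{\star},\lambda^{\star})$ attain it. Condition $(I)$ of~\eqref{eq:robust-poly_bar}, i.e.\ $\bar N^{\star}\bar H=0$, is built into the definition of $\mathcal{N}$ in~\eqref{eq:sets_barN}. For condition $(II)$ I would argue directly by weak linear-programming duality. With $\bar N^{\star}$ frozen, $\min_{\alpha\in\mathcal{A}}\|\bar N^{\star}V(\alpha)\|_{\infty}=\min_{\alpha\in\mathcal{A}}\max_{0\leq j\leq d_{N}}|N^{\star}_{j}F\basis\alpha|$ (by the block structure of $V$) is a linear program over the polyhedron $\mathcal{A}$; it is feasible whenever $\mathcal{A}\neq\emptyset$ — and if $\mathcal{A}=\emptyset$ there is no admissible attack and the claim is vacuous — with optimal value bounded below by $0$. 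Its dual, obtained by linear-programming duality as in the proof of Theorem~\ref{the:max-min-reform}, has feasible set $\{(\beta,\lambda):\sum_{j}(\beta_{2j}-\beta_{2j+1})N^{\star}_{j}F\basis=\lambda^{\top}A,\ \mathbf{1}^{\top}\beta=1,\ \beta\geq 0,\ \lambda\geq 0\}$ and objective $b^{\top}\lambda$. The pair $(e_{i},\lambda^{\star})$ lies in this dual feasible set, so weak duality gives $\min_{\alpha\in\mathcal{A}}\|\bar N^{\star}V(\alpha)\|_{\infty}\geq b^{\top}\lambda^{\star}=\gamma'_{i}>0$, whence $\|\bar N^{\star}V(\alpha)\|_{\infty}>0$ for every $\alpha\in\mathcal{A}$, which is condition $(II)$. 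By Lemma~\ref{lem:robust_scheme_reform}, $\bar N^{\star}$ then encodes a polynomial vector $N(q)$ satisfying~\eqref{eq:robust-poly}, i.e.\ a robust residual generator in the sense of Definition~\ref{def:robust_detect}; the associated filter of the form~\eqref{filter} is decoupled from the states and disturbances $x[\cdot]$, yet produces a nonzero residual for every admissible disruptive stealthy attack of Definition~\ref{def:destruct_attack}.

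I expect the only delicate point to be the index/sign bookkeeping in the first paragraph: one must pin down the precise bijection between the unit vectors $e_{i}$, $i\in\{1,\dots,2d_{N}+2\}$, and the $2d_{N}+2$ terms $\pm N_{0}F\basis,\dots,\pm N_{d_{N}}F\basis$, and verify that $(-1)^{i}N_{\lfloor i/2\rfloor}F\basis$ records the correct one so that the collapsed equality matches~\eqref{opt:max-min-relax} verbatim, sign included. Everything else is routine: the feasibility inclusion is a direct substitution, and the lower bound in the detection step uses only weak — not strong — duality, so no Slater-type qualification on $\mathcal{A}$ is needed.
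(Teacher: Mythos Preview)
Your proposal is correct and follows exactly the approach the paper indicates: the paper itself does not give a formal proof of the corollary, but the sentence preceding it states that the relaxation arises ``by restricting the feasible set of the variable~$\beta$'' to the $2d_{N}+2$ unit vectors, which is precisely your argument. Your treatment is in fact more thorough than the paper's --- in particular, your weak-duality step showing that the specific optimizer $\bar N^{\star}$ of \eqref{opt:max-min-relax} (not merely some optimizer of \eqref{opt:max-min-ref}) satisfies condition~$(II)$ of \eqref{eq:robust-poly_bar} fills a gap the paper leaves implicit, and your caveat about the index/sign bookkeeping is well placed, since the paper's own convention (calling $\lfloor\cdot\rfloor$ the ``ceiling function'') is already somewhat loose.
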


Corollary~\ref{cor:convex_rel} suggests that the maximum optimal value of $\{\gamma^{\star}_{0}, \ \gamma^{\star}_{1}, \ \cdots, \ \gamma^{\star}_{2d_{N}+2}\}$ and its corresponding~$\bar{N}^{\star}$ provide a suboptimal solution to the original robust design~\eqref{opt:max-min-opt}.


We note that the focus of this article is on stationary (time-invariant) attacks. It is also important to highlight that the robust design perspective~\eqref{opt:max-min-opt} allows the attacker to know the system model and filter parameters. In such a setting, the detection procedure could be much more difficult if the attacker would be able to dynamically adapt the attacks over the time, i.e., the attack signal is time-varying. In fact, in a multivariate attack scenario, one can construct a disruptive time-varying attack bypassing any linear residual generators. The next remark alludes more to this situation.
\begin{Rem}[Time-varying stealthy attacks]\label{rem:time-variant_att}
	Consider a multivariate attack $f=[f_1 \ f_2 \ \cdots \ f_{n_f}]^\top $ where each element is a time-varying signal $f_i = f_i[k]$. Then, the residual~\eqref{eq:residual_gen} can be rewritten as 
	\begin{align}\label{eq:dynamic_attack}
	a(q)r_D[k] = - \sum_{i=1}^{n_f} \Big(N(q)F_i f_i[\cdot]\Big)[k],
	\end{align}
	where $F=[F_1 \ F_2 \ \cdots \ F_{n_f}]$ represents the attack dynamics matrix. One can inspect that when the time-varying relation $\sum_{i=1}^{n_f} \big(N(q)F_i f_i[\cdot]\big)[k] = 0$ holds for every $k$, for instance when 
	\[f_{n_f}[k] = - \big(N(q)F_{n_f}\big)^{-1} \sum_{i=1}^{n_f-1} \Big(N(q)F_i f_i[\cdot]\Big)[k],
	\] 
	then the residual outcome~\eqref{eq:dynamic_attack} stays zero for all $k$, and as such, the attack remains undetected. 
\end{Rem}

The proposed robust design in \eqref{opt:max-min-opt} does not necessarily enforce a non-zero steady-state residual of the diagnosis filter under multivariate attacks. Namely, the design perspective of~\eqref{opt:max-min-opt} focuses on detection of attacks during the transient behavior without any requirements on long-term behavior of the residual. Indeed, the residual signal $r_{D}$ may return to zero value after a successful reaction to the attack occurrence. A more stringent perspective is to require a non-zero steady-state behavior under any admissible attack scenario in $\alpha \in \mathcal A$. This extension is addressed in the next subsection.

\subsection{Robust diagnosis filter: steady-state behavior}
In order to design a diagnosis filter with non-zero steady-state residual ``alert'' when a multivariate attack occurs, the robust optimization~\eqref{opt:max-min-opt} can be modified by a more conservative (smaller) objective function~$\mathcal{J}(\bar{N},\alpha) \Let | \bar{N} \bar{F} \alpha|$ where 
\begin{align}\label{F_bar}
\bar{F} \Let\left[\begin{matrix} F \basis & F \basis &  \cdots & F \basis \end{matrix}\right]^{\top}. 
\end{align}
A similar treatment as the preceding subsection can establish a framework for computational purposes. The next lemma follows similar objective as in Lemma~\ref{lem:robust_scheme_reform} with a more demanding requirement of the non-zero long-term residual behavior. 
%
\begin{Lem}[Non-zero steady-state residual characterization]\label{lem:robust_scheme_nz}		
	For the polynomial matrices $H(q)$, $N(q)$ and $F(q)$ as defined in Lemma~\ref{lem:robust_scheme_reform}, the family of dynamic residual generators with non-zero steady-state residual under multivariate attacks can be characterized by the algebraic relations
	\begin{align}\label{eq:robust-poly_bar_nz}
	\left\{
	\begin{array}{ll} 
	(I) & \bar{N}\bar{H} = 0, \\ 
	(II)& | \bar{N} \bar{F} \alpha | > 0, \quad \forall \alpha \in \mathcal{A},
	\end{array}
	\right. 
	\end{align}
	where $\bar{F}$ is defined in \eqref{F_bar}, and the matrices $\bar{N}, \bar{H}$ are as defined in Lemma~\ref{lem:robust_scheme_reform}.
\end{Lem}
\begin{proof}
	Recall that $N(q)H(q) = \bar{N}\bar{H} [I ,\ qI ,\ \cdots ,\ q^{d_{N} + 1}I ]^\top$. Thus if $\bar{N}\bar{H}=0$, the diagnosis filter becomes $r_{D}[k] = -a(q)^{-1}N(q)f[k]$. Note the steady-state value of the filter residual under attacks would be $-a(q)^{-1}N(q)F(q)f|_{q=1}$. Thus for the multivariate attack with $\alpha$, the steady-state value of the filter residual is $-a(1)^{-1}N(1)F(1)\basis \alpha$. The proof concludes by noting that $N(1)F(1)\basis \alpha = \bar{N}\bar{F} \alpha$. 
\end{proof}

In a similar fashion, the robust design perspective in \eqref{opt:max-min-opt} can be modified accordingly as
\begin{align}\label{opt:max-min-opt-nz}
\mu^\star \Let \max\limits_{ \bar{N} \in \mathcal{N} } \ \min\limits_{\alpha \in \mathcal{A} } \  \Big\{ \mathcal{J}(\bar{N},\alpha) \Let | \bar{N} \bar{F} \alpha | \Big\} .
\end{align}
Notice the relation between the new objective function with the absolute value and the one in~\eqref{opt:max-min-opt} with the infinity-norm. As it appears in the next result, the new setting is in fact a restricted case of the finite reformulation in Theorem~\ref{the:max-min-reform}. 

\begin{Thm}[Residual long-term behavior: exact convex reformulation and Nash equilibrium]\label{the:max-min-reform-nz-ne}
	Consider the minimax counterpart of the program~\eqref{opt:max-min-opt} as defined 
	\begin{align}\label{opt:min-max-opt-nz}
	\varphi^\star := \min\limits_{\alpha \in \mathcal{A}} \ \max\limits_{ \bar{N} \in \mathcal{N} } \  \Big\{ \mathcal{J}(\bar{N},\alpha) \Let | \bar{N} \bar{F} \alpha | \Big\}.
	\end{align}
	Each of the program~\eqref{opt:max-min-opt-nz} and \eqref{opt:min-max-opt-nz} can be equivalently reformulated through the linear programs
	\begin{subequations}
		\label{opt:thm:lp}
		\begin{align}
		\label{opt:max-min-opt-ref-nz-lp}
		&\begin{array}{ccl} 
		\mu^\star = &\max\limits_{\bar{N}, \ \lambda} & b^{\top} \lambda \\
		& \mbox{s.t.} & \bar{N}\bar{F}= \lambda^{\top}A \\
		&& \bar{N} \in \mathcal{N}, \ \lambda \geq 0 \,,  
		\end{array} \\ 
		\label{opt:min-max-opt-ref-nz-lp}
		&\begin{array}{ccl} 
		\varphi^\star = & \min\limits_{v_{1}, \, v_{2}, \, w, \, \alpha} & \mathbf{1}^{\top}v_{1} + \mathbf{1}^{\top}v_{2} \\
		& \mbox{s.t.} & \bar{H}w + v_{1} - v_{2} = \bar{F}\alpha \\
		& & v_{1} \geq 0, \ v_{2} \geq 0, \\
		& & A \alpha \geq b \,.
		\end{array}
		\end{align}
		Moreover, the value of each of these two programs coincide, i.e., $\mu^\star = \varphi^\star$. 
	\end{subequations}
\end{Thm}

\begin{proof}
	See Appendix~\ref{subsec:app_the_proof_the2}.
\end{proof}

It is worth noting the difference between the robust perspective of \eqref{opt:max-min-opt-nz} versus the minimax program~\eqref{opt:min-max-opt-nz}. While in the design perspective of~\eqref{opt:max-min-opt-nz} the filter is oblivious to the possible attack scenarios, in the perspective of~\eqref{opt:min-max-opt-nz} the filter is aware of the attack signal and opts to detect that particular signal in the presence of natural disturbances. Obviously, the former setting is the one closer to the reality and, in general, the knowledge of the attack signal should help the detection significantly. This observation can indeed be translated through the usual weak inequality of $\mu^\star \le \varphi^\star$. However, Theorem~\ref{the:max-min-reform-nz-ne} indicates that the filter performance, in view of the long-term behavior of the worst-case attack scenario, indeed does not depend on the exact knowledge of the attacker signal and the inequality holds as the equality. We summarize this discussion in the following remark.  
%
%
%
%
%
%
%

\begin{Rem}[Nash equilibrium interpretation]\label{rem:nash_equilibrium}
	If the linear programs~\eqref{opt:max-min-opt-ref-nz-lp} \eqref{opt:min-max-opt-ref-nz-lp} admit a positive optimal value $\varphi^\star = \mu^\star > 0$, then the resulting filter can detect all the admissible multivariate attacks described by Definition~\ref{def:disrupt_attack} along with a non-zero steady-state residual level. On the other hand, if the optimal values coincide with $\varphi^\star = \mu^\star = 0$, it then implies that there is no linear filter being able to decouple the admissible attack with~$\alpha^\star$, the solution to \eqref{opt:min-max-opt-ref-nz-lp}, from the natural disturbances in a long-term horizon. 
	
\end{Rem}

\section{Numerical Results} 
\label{sec:results}

\subsection{Test system and diagnosis filter description}\label{subsec:sys_descrip}

In order to validate the effectiveness of the diagnosis filter with application to power system cyber security, we employed the IEEE 39-bus system which is well-known as a standard system for testing of new power system analysis. As shown in Figure~\ref{fig:39bus}, this system consists of 3 areas and 10 generators where 7 of them are equipped with AGC for frequency control. All the participating generators in each area are with equal participation factors. The total load of the three-area system is ${5.483}\ \mathrm{GW}$ for the base of ${100}\ \mathrm{MVA}$ and ${60}\ \mathrm{Hz}$. The generator specifications and AGC parameters of each area are referred to \cite{bevrani2008}, and the linear frequency dynamics model has been developed in the preceding Section~\ref{sec:powersys_model}. Thus we result in a 19-order model in the form of \eqref{eq:dis_cls}.

We apply the diagnosis filter proposed in Section~\ref{sec:detect} to detect multivariate disruptive attacks on the measurements of AGC system. In the following simulations, we set the degree of the dynamic residual generator $d_{N} = 3$ which is much less than the order of the dynamics model, the sampling time $T_s = {0.5}\ \mathrm{sec}$ and the finite time horizon ${60}\ \mathrm{sec}$. To design the filter, we set the denominator in the form $a(q) = (q - p)^{d_{N}}/{{(1-p)}^{d_{N}}}$ where $p$ is a user-defined variable acting as the {\em pole} of the transfer operator $R(q)$, and it is normalized %
in steady-state value for all feasible poles. The pole is set to be $p=0.8$ for a stable dynamic behavior at the beginning, and we have deployed the solver CPLEX to solve the corresponding optimization problems.  

\subsection{Simulation results}
\label{subsec:sim_results}

\begin{figure*}[t!p]
	\centering
	\begin{subfigure}[t]{0.49\textwidth}
		\centering
		\includegraphics[scale=0.52]{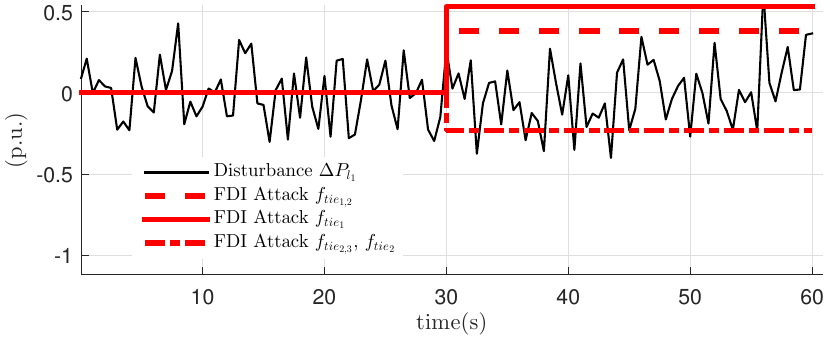}
		\caption{Load disturbance and basic attack}\label{subfig11:ad}
	\end{subfigure}
	~
	\begin{subfigure}[t]{0.49\textwidth}
		\centering
		\includegraphics[scale=0.52]{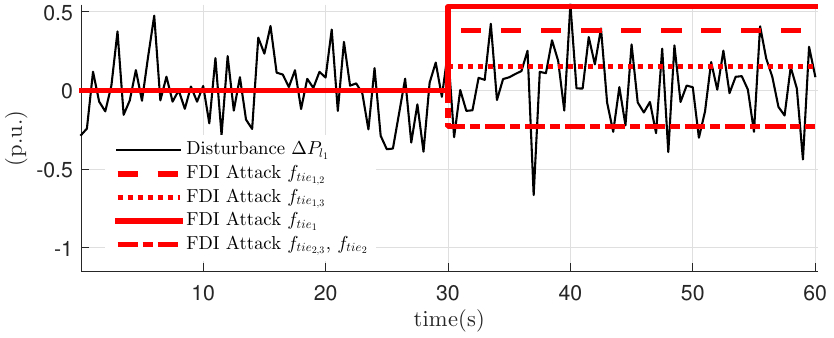}
		\caption{Load disturbance and stealthy attack}\label{subfig21:ad}
	\end{subfigure}
	\\ 
	\begin{subfigure}[t]{0.49\textwidth}
		\centering
		\includegraphics[scale=0.52]{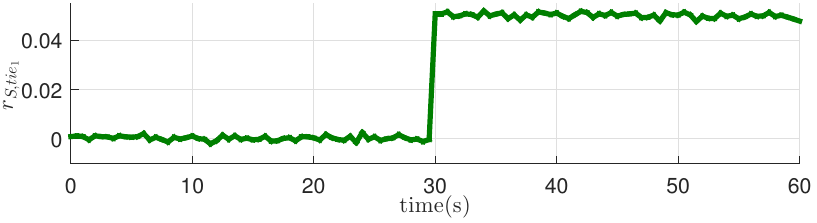}
		\caption{Residual of static detector under basic attack}\label{subfig14:srn}
	\end{subfigure}
	~	
	\begin{subfigure}[t]{0.49\textwidth}
		\centering
		\includegraphics[scale=0.52]{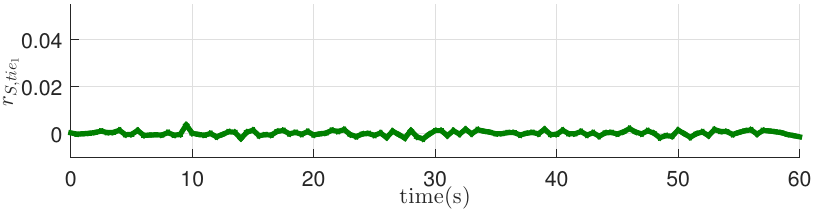}
		\caption{Residual of static detector under stealthy attack}\label{subfig24:srn}
	\end{subfigure}
	\\ 
	\begin{subfigure}[t]{0.49\textwidth}
		\centering
		\includegraphics[scale=0.52]{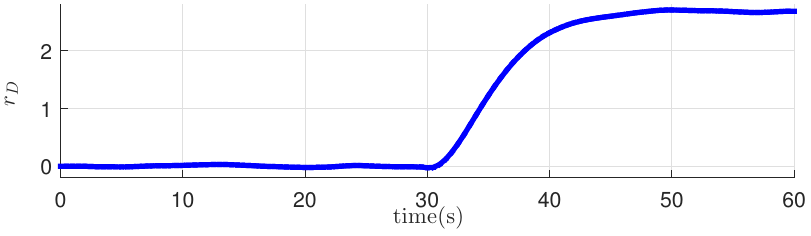}
		\caption{Residual of dynamic detector under basic attack}\label{subfig15:rrn}
	\end{subfigure}
	~
	\begin{subfigure}[t]{0.49\textwidth}
		\centering
		\includegraphics[scale=0.52]{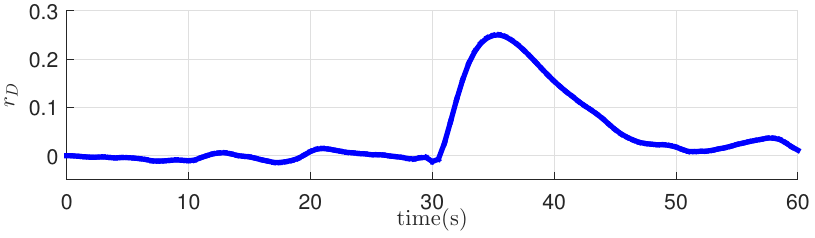}
		\caption{Residual of dynamic detector under stealthy attack}\label{subfig25:rrn}
	\end{subfigure}
	\caption{Static detector in \eqref{eq:static_r} versus dynamic detector (diagnosis filter) from Corollary~\ref{cor:convex_rel} under basic and stealthy attacks.} 
	\label{fig:resfig_(un)stealth}
\end{figure*}

To evaluate the performance of the diagnosis filter, the disturbances $d_{i} = \Delta P_{l_{i}}$ are modeled as stochastic load patterns. To capture its uncertainty, as shown in Figure~\ref{subfig11:ad} and Figure~\ref{subfig21:ad}, we mainly model $\Delta P_{l_{1}}$ in Area 1 as random zero-mean Gaussian signals. It should be noted that tie-line power flow measurements %
are much more vulnerable to cyber attacks, comparing with frequency measurements (e.g., the anomalies in frequency can be easily detected by comparing the corrupted reading with the normal one.) \cite{Chen2018}. Therefore as indicated in Figure~\ref{fig:39bus} we mainly focus on the scenario that there are 5 vulnerable tie-line power measurements, namely $\Delta P_{tie_{1,2}}$, $\Delta P_{tie_{1,3}}$, $\Delta P_{tie_{1}}$, $\Delta P_{tie_{2,3}}$ and $\Delta P_{tie_{2}}$. Recalling Definition~\ref{def:disrupt_attack} for %
stealthy attack basis, thus there exist 3 basis vectors in the spanning set and we model them as follows: $f_{1}= [0.1 \ 0 \ 0.1 \ 0 \ 0]^T$, $f_{2}= [0.1 \ 0.15 \ 0.25 \ 0 \ 0]^T$, $f_{3}= [0 \ 0 \ 0 \ 0.1 \ 0.1]^T$ (all in $\mathrm{p.u.}$). Here each basis vector lies in the range space of the output matrix that the corrupted measurements still align with an actual physical state, bypassing the static detector $r_{S}[\cdot]$. %
Furthermore, without loss of generality we set $A=\mathbf{1}^\top$ and $b=1.5$ in the set $\mathcal{A}$ and $\eta = 10$ in the set $\mathcal{N}$. The design variable $\bar{N}$ of the robust residual generator is first derived by solving \eqref{opt:max-min-opt} through $({\rm LP}_{i})$. The optimal value achieves maximum for $i=2$ that $\gamma^{\star}_{2} = 300$, which implies a robust detection during the transient behavior as Corollary~\ref{cor:convex_rel}. For the given $\bar{N}$, the multivariate attack coordinates $\alpha = [2.8 \ 1 \ -2.3]^{\top}$ are obtained by solving the inner minimization of \eqref{opt:max-min-opt}. Next, we look into the steady-state behavior of the filter with the above sets $\mathcal{N}$ and  $\mathcal{A}$. For this, following Theorem~\ref{the:max-min-reform-nz-ne} we solve \eqref{opt:max-min-opt-nz} and \eqref{opt:min-max-opt-nz} through the programs~\eqref{opt:max-min-opt-ref-nz-lp} and \eqref{opt:min-max-opt-ref-nz-lp}. It turns out that the derived optimal values satisfy the equality $\varphi^\star = \mu^\star = 0$, indicating that the optimal multivariate attack with~$\alpha^\star$, the optimizer of the program~\eqref{opt:min-max-opt-ref-nz-lp} and an optimal solution to \eqref{opt:min-max-opt-nz}, is a stealthy attack in the long-term horizon. We highlight that, thanks to the fact that the optimal values of the programs~\eqref{opt:max-min-opt-ref-nz-lp} \eqref{opt:min-max-opt-ref-nz-lp} form a Nash equilibrium, even with the exact information of the stealthy attack coefficients~$\alpha^\star$, we still cannot decouple the long-term behavior of the residual from the natural disturbances; see Remark~\ref{rem:nash_equilibrium}.

In the first simulation, we begin with a general scenario where the multivariate attack is not carefully coordinated, i.e., basic attack. Thus as shown in Figure~\ref{subfig11:ad}, only 4 of 5 vulnerable measurements are compromised that $f_{tie_{1,2}} = 0.38 p.u.$, $f_{tie_{1}} = 0.53 p.u.$, $f_{tie_{2,3}} = -0.23 p.u.$ and  $f_{tie_{2}} = -0.23 p.u.$. Note that since the injected data on $\Delta P_{tie_{1,2}}$ and $\Delta P_{tie_{1}}$ are inconsistent, the static detector is also expected to be triggered. %
To test the detectors in a more realistic setup, we also consider the presence of process and measurements noises. %
The process noise term added to the state equation of Area 1 is zero-mean Gaussian noises with the covariance matrix $R_{X_{1}} = 0.03 \times \mbox{diag} (\left[ 1 \ 1 \ 0.03 \ 1 \ 1 \ 1 \ 1 \right]^\top)$, i.e., the covariance of the noise to the frequency is 0.009 and the covariance of other states' noise is 0.03 \cite{Ameli2018a}. Similarly, the measurement noise term added to the measurements of Area 1 is with the covariance matrix $R_{Y_{1}} = 0.03 \times \mbox{diag} (\left[ 1 \ 1 \ 1 \ 0.03 \ 1 \ 1 \ 1 \ 1 \ 1 \right]^\top)$, i.e., the covariance of the frequency measurement is 0.009 and the covariance of other measurements' noise is~0.03 \cite{Ameli2018a}. Note the residue $r_{S}$ of BDD in \eqref{eq:static_r} becomes $r_{S}[k] = (I - C({C}^\top R_{Y}^{-1} {C})^{-1}{C}^\top R_{Y}^{-1})Y[k]$ under the noisy system. The attacks are launched at $k_{\min} = {30}\ \mathrm{sec}$. In Figure~\ref{subfig14:srn} and Figure~\ref{subfig15:rrn}, results of the static detector in \eqref{eq:static_r} and the proposed dynamic detector (diagnosis filter) are presented. 
Both detectors have succeeded to generate a diagnostic signal when attacks occurred, and the diagnosis filter residual $r_{D}$ is significantly decoupled from stochastic load disturbances, and keeps sensitive to the multivariate attacks for a successful detection under noisy system settings.

In the second simulation, to challenge the detectors, now the multivariate attacks have been launched on all the 5 vulnerable measurements and the derived attack coefficient $\alpha$ from the optimization results has been used for a more intelligent adversary. Thus in Figure~\ref{subfig21:ad}, the corruptions become $f_{tie_{1,2}} = 0.38 p.u.$, $f_{tie_{1,3}} = 0.15 p.u.$, $f_{tie_{1}} = 0.53 p.u.$, $f_{tie_{2,3}} = -0.23 p.u.$ and  $f_{tie_{2}} = -0.23 p.u.$. This corresponds to the worst case for the diagnosis filter that the adversary is given the knowledge of the residual generator's parameter $\bar{N}$ that it tries to minimize the payoff function over $\mathcal{A}$. Besides, the noisy system settings have been considered. Figure~\ref{subfig24:srn} and Figure~\ref{subfig25:rrn} demonstrate all the simulation results. In Figure~\ref{subfig24:srn}, the static detector becomes totally blind to the occurrence of such an intelligent attack. However, as we can see in Figure~\ref{subfig25:rrn}, even in the worst case, the diagnosis filter works perfectly well under the noisy system, generate a residual ``alert''  for the presence of multivariate attacks. We can also see that the residual output becomes close to zero value again after a successful detection during the transient behavior in Figure~\ref{subfig25:rrn}, which is consistent to the aforementioned result $\varphi^\star = \mu^\star = 0$ and Remark~\ref{rem:nash_equilibrium}. These simulations also prove the effectiveness and robustness of the proposed diagnosis filter design. 

\subsection{Further discussions}
In this section we elaborate several practical aspects of the proposed filter in the preceding section. 

\subsubsection{Diagnosis sensitivity to filter poles}
While the denominator of the filter~$a(q)$ in \eqref{eq:residual_gen} is chosen rather arbitrarily, up to a stability condition, the poles however has a significant impact on the residual sensitivity. As a general rule, the smaller the poles, the faster the residual responds, and the more sensitive the residual responds to model imprecision and noises. Simulation results in Figure~\ref{fig:resfig_diffp_unstealth} in Appendix~\ref{subsec:add_sim_res} numerically illustrate this relation when the filter poles vary. 

\subsubsection{Other types of attacks}
In addition to a smart multivariate measurement attacks, the main focus of this study, there are several other types of attacks that we briefly discuss in the following:
\begin{itemize}
	\item {\em Denial-of-service (DoS) attack}: A type of availability attack where the attacker aims to prevent some specific data from being delivered to the respective destinations. 
	\item {\em Replay attack}: A two-stage attack where the adversary gathers a sequence of data packets at stage 1, and then replays the recorded data afterwards at stage 2. 
\end{itemize}

From a detection point of view, DoS attacks are trivially detectable without any sophisticated mechanisms as the absence of data is not stealthy. In the typical DoS attack modeling, the missing data is typically replaced with the last received ones \cite{Schenato2009}. In such a mechanism, the DoS can be treated as an ``injection'' attack. We investigate the performance of our filter in the presence of this class of attacks in Figure~\ref{fig:resfig_dos} in Appendix~\ref{subsec:add_sim_res}. Numerical results confirm that the proposed filter can successfully detect the DoS attacks. In regard with the replay attack, the articles~\cite{Mo2009,Hoehn2016} offer sufficient conditions under which plausible attacks may remain stealthy irrespective of the detection mechanism providing that the attacker has access all the necessary data channels and excite attack of stage 2 at a suitable time. 

\subsubsection{Observer-based diagnosis filters}
%
%

%

Another major technique for anomaly detection builds on observer-based techniques. In this view, the estimate of the system states, or in more general setting {\em output observer}, is a reference to alert the abnormality ~\cite{Ge1988}. We close this section by a brief summary of the differences between these approaches and the one proposed in this study. 
%
\begin{itemize}
	\item The observer-based approaches typically yield diagnosis filters with higher dynamical system degrees than the approach proposed in this study. A low-order diagnosis filter is often more desired due to practical aspects of online implementation particularly for large-scale power systems. 
	
	\item Observer-based diagnosis filters usually rely on a precondition of system observability. An extended version of such filters relaxes this condition to the so-called Luenberger-type conditions~\cite{Andrieu2006}. Our diagnosis filter, however, requires a weaker condition reflected through the feasibility condition of the resulting optimization programs, e.g., when the program~\eqref{eq:robust-poly_bar} in Lemma~\ref{lem:robust_scheme_reform} is feasible.
	
	\item Thanks to the optimization-based framework, unlike the observer-based approaches, we have a systematic approach to incorporate a multivariate attack scenario into the framework. 
\end{itemize}
%
%
%

\section{Conclusion} 
\label{sec:conclusion}

In this article, we investigated the problem of anomaly detection in the power system cyber security with a particular focus on exploiting the dynamics information where tempering multiple measurements data may be possible. Our study showed that a dynamical perspective to the detection task indeed offers powerful diagnosis tools to encounter attack scenarios that may remain stealthy from a static point of view. The effectiveness of this result was validated by simulations in the IEEE 39-bus system. Future research directions that we envision include an extension to nonlinear systems, as well as a setting exposed to the ``dynamic'' (time-variant) attacks in Remark~\ref{rem:time-variant_att}, as opposed to the linear models and stationary attack scenarios studied in this article.

\setcounter{section}{1}

\section*{Appendix I: Technical Proofs}

\setcounter{subsection}{0}

\subsection{Proof of Theorem~\ref{the:max-min-reform}} \label{subsec:app_the_proof1}
Let us recall that $\bar{N} V(\alpha) = \left[\begin{matrix} N_{0}F\basis\alpha & N_{1}F\basis\alpha & \cdots & N_{d_{N}} F\basis\alpha \end{matrix}\right]$, and as such, the payoff function of the robust reformulation~\eqref{opt:max-min-opt} is $\mathcal{J}(\bar{N},\alpha)= \max_{i} |N_{i}F\basis\alpha|$ where $i \in \{ 0, \cdots, d_{N} \}$. By introducing an auxiliary variable $\beta$ in the simplex set $\mathcal{B}\Let\{\beta \in \R^{2d_{N}+2} \ | \ \beta \geq 0, \  \mathbf{1}^{\top}\beta = 1\}$, one can rewrite $\mathcal{J}$ as 
$$\mathcal{J}(\bar{N},\alpha) = \max\limits_{\beta \in \mathcal{B}} \ \sum_{i=0}^{d_{N}}(\beta_{2i}-\beta_{2i+1})N_{i}F\basis.$$ 
In this light, the original robust strategy~\eqref{opt:max-min-opt} can be equivalently described via 
\begin{equation}\label{dual}
\max\limits_{ \bar{N} \in \mathcal{N} } \, \min\limits_{\alpha \in \mathcal{A} } \, \max\limits_{\beta \in \mathcal{B}} \left\{ \sum_{i=0}^{d_{N}}(\beta_{2i}-\beta_{2i+1})N_{i}F\basis\alpha \right\}. \nonumber
\end{equation}

Note that given a fixed $\bar{N}$ the inner minimax optimization is indeed a bilinear objective in the decision variables and the respective feasible sets $\mathcal A$ and $\mathcal B$ are convex. Since one of the sets, $\mathcal B$, is also compact, then the zero-duality gap holds. Therefore, interchanging the optimization over $\alpha \in \mathcal A$ and $\beta \in \mathcal B$ yields
\begin{equation} \label{opt:max-min-beta}
\gamma^\star = \max\limits_{ \bar{N} \in \mathcal{N}, \ \beta \in \mathcal{B} } \ \left\{\min\limits_{\alpha \in \mathcal{A} } \ \sum_{i=0}^{d_{N}}(\beta_{2i}-\beta_{2i+1})N_{i}F\basis\alpha \right\}. 
\end{equation}
The inner minimization of \eqref{opt:max-min-beta} is a (feasible) linear program. We can use the duality again. To this end, let us assume that the decision variables~$\bar{N}$ and $\beta$ are fixed and consider the Lagrangian function
\begin{align}
\mathcal L(\alpha;\lambda) = b^\top\lambda  + \Big(\sum_{i=0}^{d_{N}}(\beta_{2i}-\beta_{2i+1})N_{i}F\basis - \lambda^{\top}A\Big)\alpha, \nonumber
\end{align}
where optimizing over an unconstrained variable~$\alpha$ becomes 
\begin{align*}
\min_{\alpha}\mathcal L(\alpha;\lambda) = \left\{
\begin{array}{ll}
\ b^\top\lambda & \mbox{if} ~ \left\{\begin{array}{l}\sum\limits_{i=0}^{d_{N}}(\beta_{2i}-\beta_{2i+1})N_{i}F\basis = \lambda^{\top}A \\ \lambda \geq 0 \end{array} \right. \vspace{1mm}\\
-\infty & \mbox{otherwise,}
\end{array}
\right.\nonumber
\end{align*}
Using the above characterization as the most inner optimization program in \eqref{dual} leads to
\begin{align}\label{opt:innermin_dual}
&\begin{array}{ccl} 
& \max\limits_{\lambda} & b^{\top}\lambda \\
& \mbox{s.t.} & \sum\limits_{i=0}^{d_{N}}(\beta_{2i}-\beta_{2i+1})N_{i}F\basis = \lambda^{\top}A,\\
& & \lambda \geq 0.
\end{array}
\end{align}

It then suffices to combine maximizing over the auxiliary variable~$\lambda$ together with the variables $\bar N$ and $\beta$ to arrive at the main result in \eqref{opt:max-min-ref}.

\subsection{Proof of Theorem~\ref{the:max-min-reform-nz-ne}} \label{subsec:app_the_proof_the2}
We first prove the convex reformulation. For a given $\bar{N} \in \mathcal N$, the inner minimization of \eqref{opt:max-min-opt-nz} can be translated as 
\begin{align}\label{opt:max-min-nz-innermin}
&\begin{array}{ccl} 
& \min\limits_{\alpha \in \mathcal{A}, \, r} &  r  \\
& \mbox{s.t.} & \bar{N}\bar{F}\alpha - r \leq 0, \\
& & -\bar{N}\bar{F}\alpha - r \leq 0. \nonumber
\end{array}
\end{align}	
The Lagrangian of the inner minimization 
reads as 
\begin{equation}
\mathcal L(\alpha, \, r; \, \beta, \, \lambda) = b^\top \lambda  + \big((\beta_{0}-\beta_{1})\bar{N}\bar{F} - \lambda^{\top}A\big)\alpha + (1-\beta_{0}-\beta_{1})r. \nonumber \\
\end{equation}
%
Optimizing over the variables~$\alpha, \ r$ yields 
\begin{align*}
\min_{\alpha, \ r} \mathcal L(\alpha, \, r; \, \beta, \, \lambda) = \left\{
\begin{array}{ll}
\ b^\top\lambda & \mbox{if} ~ 
\left\{\begin{array}{l} (\beta_{0}-\beta_{1})\bar{N}\bar{F} = \lambda^{\top}A \\
\beta_{0}+\beta_{1} \leq 1 \\
\beta_0 \geq 0, \ \beta_1 \geq 0, \ \lambda \geq 0 \end{array} \right. \vspace{1mm}\\
-\infty &  \mbox{otherwise.}
\end{array}
\right.\nonumber
\end{align*}
Then, combining maximization over the auxiliary variables $\lambda$, $\beta_0$, $\beta_1$ together with the variable $\bar{N}$ arrives at the optimization program,
\begin{align}\label{opt:max-min-opt-ref-nz}
&\begin{array}{ccl}
\mu^\star = & \max\limits_{\bar{N}, \, \beta_0, \, \beta_1, \, \lambda} &  b^{\top} \lambda \\
& \mbox{s.t.} & (\beta_{0}-\beta_{1})\bar{N}\bar{F}= \lambda^{\top}A, \\
& & \beta_{0}+\beta_{1} \leq 1, \ \beta_0 \geq 0, \ \beta_1 \geq 0, \\
& & \bar{N} \in \mathcal{N}, \ \lambda \in \R^{n_b}, \ \lambda \geq 0.  
\end{array}
\end{align}
Note that the actual program~\eqref{opt:max-min-opt-ref-nz-lp} is a restriction of \eqref{opt:max-min-opt-ref-nz} where the variables~$\beta_0$ and $\beta_1$ are restricted to $\beta_0 = 1$ and $\beta_1= 0$. Next, we show that this restriction is indeed without loss of generality. To this end, suppose the tuple ($\beta_0^\star$, $\beta_1^\star$, $\bar{N}^\star$, $\lambda^\star$) is an optimal solution to the program~\eqref{opt:max-min-opt-ref-nz}. 
Note that the optimal variables $\beta_{0}^\star$ and $\beta_{1}^\star$ may satisfy one of the following three properties:

\begin{itemize}
	\item [(i)] $\beta_0^\star = \beta_1^\star$: In this case, $\lambda^\star = 0$, and therefore the optimal value $\mu^\star=0$. This optimal solution can be trivially achieved in the program~ \eqref{opt:max-min-opt-ref-nz-lp} by setting $\bar N = 0$.
	\item [(ii)] $\beta_0^\star > \beta_1^\star$: Observe that the tuple~$\big(\beta_{0}^{'}=1$,  $\beta_{1}^{'}=0$, $\bar{N}^{'}=\bar{N}^\star$, $\lambda^{'}={\lambda^\star}/{(\beta_0^\star - \beta_1^\star)}\big)$ is a feasible solution with the objective value~${b^\top \lambda^\star}/{(\beta_0^\star - \beta_1^\star)}$. Since $b^\top \lambda^\star \ge 0$ by optimality assumption and $\beta_0^\star - \beta_1^\star \in (0,1]$, then this feasible solution has a possibly higher optimal value, and therefore $\beta_0^\star - \beta_1^\star = 1$. That is, $\beta_0^\star = 1$ and $\beta_1^\star = 0$. 
	\item [(iii)] $\beta_0^\star < \beta_1^\star$: Following similar steps as the previous case together with the symmetric property of the feasible set~$\mathcal N$, one can show that the optimal value of the program~\eqref{opt:max-min-opt-ref-nz} also coincides with the restricted version in \eqref{opt:max-min-opt-ref-nz-lp}. 
\end{itemize}

This concludes the proof of the convex reformulation from \eqref{opt:max-min-opt-nz} to \eqref{opt:max-min-opt-ref-nz-lp}. In regard with the minimax problem \eqref{opt:min-max-opt-nz}, let us recall the symmetric property of the feasible set $\mathcal{N}$ in the variable $\bar{N}$. With a fixed $\alpha$, the inner maximization can be directly formed as $\max_{\bar{N} \in \mathcal{N}} \bar{N} \bar{F} \alpha $ whose Lagrangian becomes 
\begin{equation}
\mathcal L(\bar{N}; v, w) = - (\mathbf{1}^{\top}v_{1} + \mathbf{1}^{\top}v_{2})  + \big(w^\top \bar{H}^\top + v_{1}^\top - v_{2}^\top - (\bar{F}\alpha)^\top\big)\bar{N}^{\top}, \nonumber \\
\end{equation}
%
Optimizing over the variable~$\bar{N}$ leads to
\begin{align*}
\min_{\bar{N}} \mathcal L(\bar{N}; v, w) = \left\{
\begin{array}{ll}
-\mathbf{1}^{\top}v_{1} - \mathbf{1}^{\top}v_{2} & \mbox{if}~
\left\{\begin{array}{l}\bar{H}w + v_{1} - v_{2} = \bar{F}\alpha \\
v_1 \geq 0, \  v_2 \geq 0 \end{array} \right. \vspace{1mm}\\
-\infty & \mbox{otherwise.}
\end{array}
\right.\nonumber
\end{align*}
Thus, combining minimization over the auxiliary variables $v_{1}, \ v_{2}, \ w$ together with the variable $\alpha$, the minimax optimization \eqref{opt:min-max-opt-nz} can be reformulated as the linear program \eqref{opt:min-max-opt-ref-nz-lp}.

Finally, we show that the solution to programs~\eqref{opt:thm:lp} indeed forms a Nash equilibrium between the programs~\eqref{opt:max-min-opt-nz} and \eqref{opt:min-max-opt-nz}. Thus far, we have reformulated maximin and minimax problems as linear programs~\eqref{opt:thm:lp}. The idea is to show that these programs have the same optimal values. In fact, we show that the programs are dual of each other, and that the strong duality holds when both programs are feasible. To this end, we resort to the duality of \eqref{opt:max-min-opt-ref-nz-lp} with the Lagrangian 
\begin{align}
\mathcal L(\bar{N}, \lambda; \alpha, v, w) & = \big(w^\top \bar{H}^\top + v_{1}^\top - v_{2}^\top - (\bar{F}\alpha)^\top\big)\bar{N}^{\top}+(\alpha^\top A^{\top} - b^{\top}) \lambda  -(\mathbf{1}^{\top}v_{1} + \mathbf{1}^{\top}v_{2}) . \nonumber
\end{align}
%
Optimizing over the variables~$\bar{N}$, $\lambda$ yields
\begin{align*}
\min_{\bar{N},\lambda} \mathcal L(\bar{N}, \lambda; \alpha, v, w)  = \left\{
\begin{array}{ll}
-\mathbf{1}^{\top}v_{1} - \mathbf{1}^{\top}v_{2} & \mbox{if} 
\left\{\begin{array}{l} \bar{H}w + v_{1} - v_{2} = \bar{F}\alpha \\
A\alpha \geq b \\
v_1 \geq 0, \ v_2 \geq 0 \end{array} \right. \vspace{1mm}\\
-\infty & \mbox{otherwise.}
\end{array}
\right.\nonumber
\end{align*}
It is not difficult to see that the above program coincides with the program~\eqref{opt:min-max-opt-ref-nz-lp}; this concludes the proof.

\setcounter{section}{2}
\section*{Appendix II: System Parameters \& Added Simulation Results}

\setcounter{subsection}{0}

\subsection{Dynamic Feedback Controller Modeling}
\label{subsec:app_rem_dyn_ctrl}

Consider a dynamical system (e.g., the electrical power system studied in Section~\ref{sec:powersys_model}). Suppose the control signal is implemented as a {\em dynamic} feedback controller described by the discrete-time dynamics 
\begin{equation}\label{eq:dyn_ctrl}
\left\{
\begin{aligned}
& X_{c}[k+1] = A_{c}X_{c}[k] + B_{c}Y[k], \nonumber\\
& u[k] =   C_{c}X_{c}[k] + D_{c}Y[k], \nonumber
\end{aligned}
\right.
\end{equation}
where the input is the dynamical system measurements~$Y[\cdot]$, the output the control signal~$u[\cdot]$, and the internal state of the controller is denoted by $X_{c} \in \R^{n_{c}}$. When an attack occurs on the measurements, it affects the dynamics of the controller and consequently the involved physical system. To study the control dynamics together with the original dynamical system, one can augment the states of the system~\eqref{eq:dis_sys} together with the controller's as $\hat{X}\Let[X^{\top} \ X_{c}^{\top}]^{\top}$. Assuming that the control signal can also be measured, one can also introduce an augmented measurement signals as~$\hat{Y}=[Y^{\top} \ u^{\top}]^{\top}$. Following this procedure, the dynamics of the closed-loop system is described by
\begin{equation}\label{eq:full_sys}
\left\{
\begin{aligned}
& \hat{X}[k+1] = \hat{A}_{cl} \hat{X}[k]  + \hat{B}_{d} d[k] + \hat{B}_{f} f[k], \\
& \hat{Y}[k] =  \hat{C}\hat{X}[k]  + \hat{D}_f f[k].
\end{aligned}
\right.
\end{equation}
where the involved matrices are defined as
\begin{equation}\label{eq:sys_matrix_def}
\begin{aligned}
&\hat{A}_{cl} \Let \left[\begin{matrix} A_{x}+B_u D_c C & B_u C_c \\ B_c C & A_{c} \end{matrix}\right], \quad \hat{B}_{d} \Let \left[\begin{matrix} B_{d} \\ 0 \end{matrix}\right], \quad \hat{B}_{f} \Let \left[\begin{matrix} B_u D_c D_f \\ B_c D_f \end{matrix}\right], \nonumber\\
&\quad \quad \quad \quad \quad \hat{C} \Let \left[\begin{matrix} C & 0 \\ D_c C & C_{c} \end{matrix}\right], \quad  \hat{D}_f \Let \left[\begin{matrix} D_f \\ D_c D_f \end{matrix}\right] \,. \nonumber
\end{aligned}
\end{equation}
In this view, the augmented system~\eqref{eq:full_sys} shares the same structure as \eqref{eq:dis_cls} studied in the main part of the article for the case of static feedback controller.

\begin{figure*}[t!p]
	\centering
	\begin{subfigure}[t]{0.49\textwidth}
		\centering
		\includegraphics[scale=0.52]{fig11ad_unstealth_p08_0403.pdf}
		\caption{Load disturbance and basic attack}\label{subfig11:add_ideal}
	\end{subfigure}
	~
	\begin{subfigure}[t]{0.49\textwidth}
		\centering
		\includegraphics[scale=0.52]{fig21ad_stealth_p08_0403.pdf}
		\caption{Load disturbance and stealthy attack}\label{subfig21:add_ideal}
	\end{subfigure}
	\\
	\begin{subfigure}[t]{0.49\textwidth}
		\centering
		\includegraphics[scale=0.52]{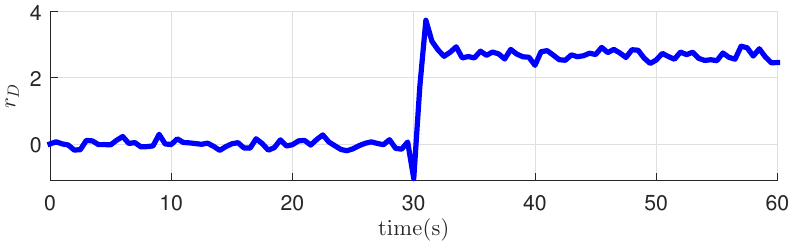}
		\caption{Residual signal $r_{D}$ with pole $p=0.1$}\label{subfig31:r_p01}
	\end{subfigure}
	~
	\begin{subfigure}[t]{0.49\textwidth}
		\centering
		\includegraphics[scale=0.52]{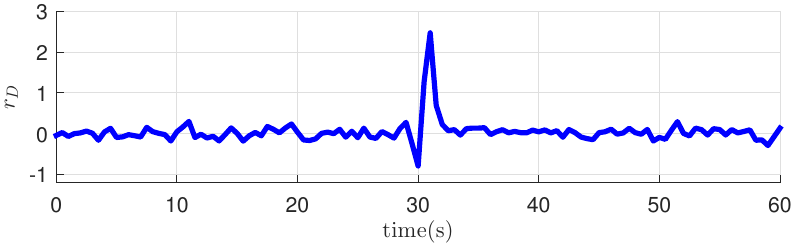}
		\caption{Residual signal $r_{D}$ with pole $p=0.1$}\label{subfig41:r_p01}
	\end{subfigure}
	\\
	\begin{subfigure}[t]{0.49\textwidth}
		\centering
		\includegraphics[scale=0.52]{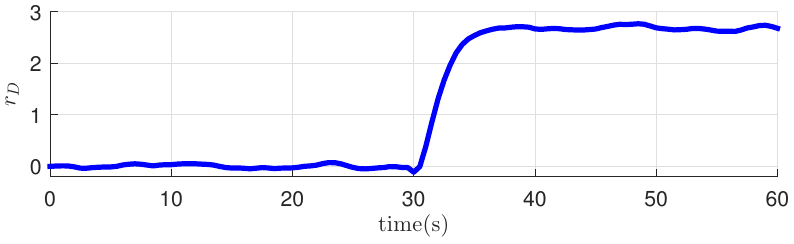}
		\caption{Residual signal $r_{D}$ with pole $p=0.6$}\label{subfig31:r_p06}
	\end{subfigure}
	~
	\begin{subfigure}[t]{0.49\textwidth}
		\centering
		\includegraphics[scale=0.52]{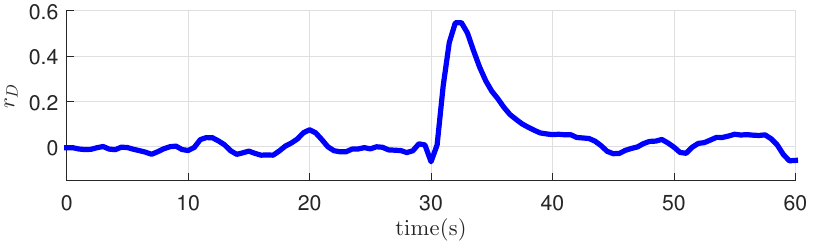}
		\caption{Residual signal $r_{D}$ with pole $p=0.6$}\label{subfig41:r_p06}
	\end{subfigure}
	\\ 
	\begin{subfigure}[t]{0.49\textwidth}
		\centering
		\includegraphics[scale=0.52]{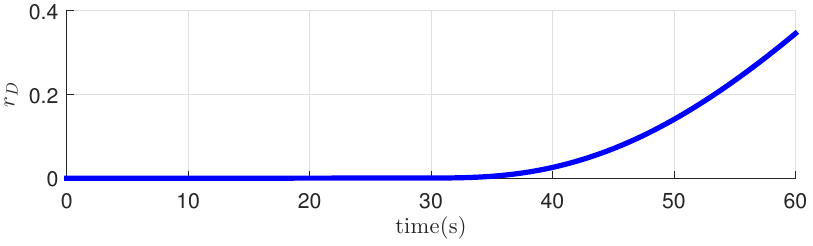}
		\caption{Residual signal $r_{D}$ with pole $p=0.98$}\label{subfig31:r_p098}
	\end{subfigure}
	~
	\begin{subfigure}[t]{0.49\textwidth}
		\centering
		\includegraphics[scale=0.52]{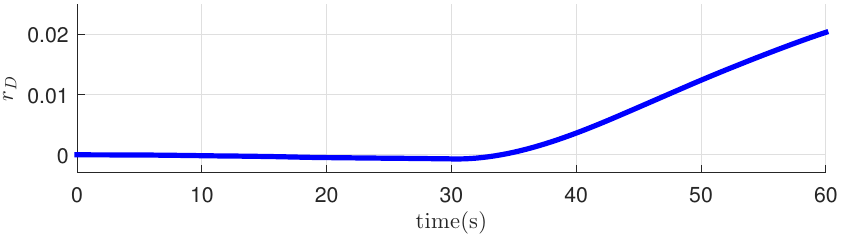}
		\caption{Residual signal $r_{D}$ with pole $p=0.98$}\label{subfig41:r_p098}
	\end{subfigure}
	\\
	\caption{Results of dynamic detector (diagnosis filter) with different {\em poles} ($p=0.1, \ 0.6, \ 0.98$) under basic and stealthy attacks.} \label{fig:resfig_diffp_unstealth}
\end{figure*}

\subsection{AGC Parameters of the three-area 39-bus system}\label{subsec:agc_39_sys_matrices}
In this subsection we provide the involved matrices and parameters of the three-area 39 system. We take the model description of Area 1 in the three-area system in Figure~\ref{fig:39bus} of Section~\ref{sec:powersys_model} as an instance,
\begin{equation} \label{eq:spx_B1d}
B_{1,d} = \left[\begin{matrix} 0 & 0 & -\frac{1}{2H_{1}} & 0 & 0 & 0 \end{matrix}\right]^{\top}, \nonumber
\end{equation}
\begin{gather}\label{eq:spx_A11}
A_{11} = \left[\begin{matrix} 0 & 0 & T_{12} & 0 & 0 & 0 \\ 0 & 0 & T_{13} & 0 & 0 & 0 \\ -\frac{1}{2H_{1}} & -\frac{1}{2H_{1}} & -\frac{D_{1}}{2H_{1}} & \frac{1}{2H_{1}} & \frac{1}{2H_{1}} & 0 \\ 0 & 0 & -\frac{1}{T_{ch_{1,1}}S_{1,1}} & -\frac{1}{T_{ch_{1,1}}} & 0 & \frac{\phi_{1,1}}{T_{ch_{1,1}}} \\ 0 &  0 & -\frac{1}{T_{ch_{1,2}}S_{1,2}} & 0 & -\frac{1}{T_{ch_{1,2}}} & \frac{\phi_{1,2}}{T_{ch_{1,2}}} \\ -K_{I_{1}} & -K_{I_{1}} & -K_{I_{1}}B_{1} & 0 & 0 & 0 \end{matrix}\right] \, . \nonumber
\end{gather}
%

As we have assumed a measurement model with high redundancy, the matrix $C_{i}$ for Area 1 becomes
\begin{equation} \label{eq:spy_c1}
C_{1} = \left[\begin{matrix} 1 & 0 & 0 & 0 & 0 & 0 \\ 0 & 1 & 0 & 0 & 0 & 0 \\ 0 & 0 & 1 & 0 & 0 & 0 \\ 0 & 0 & 0 & 1 & 0 & 0 \\ 0 & 0 & 0 & 0 & 1 & 0 \\ 0 & 0 & 0 & 0 & 0 & 1 \\ 1 & 1 & 0 & 0 & 0 & 0 \\ 0 & 0 & 0 & 1 & 1 & 0\end{matrix}\right]^{\top}. \nonumber
\end{equation}

In Area 1, the vulnerable measurements to cyber attacks are the ones of tie-line power flows $\Delta P_{tie_{1,2}}$, $\Delta P_{tie_{1,3}}$ and $\Delta P_{tie_{1}}$. Thus the AGC signal $\Delta P_{agc_{1}}$ would be corrupted into
\begin{equation}\label{eq:agc_f}
\Delta \dot{P}_{agc_{1}} =  - k_{1}(B_{1} \Delta \omega_{1} +  \Delta P_{tie_{1,2}} + f_{tie_{1,2}} + \Delta P_{tie_{1,3}} + f_{tie_{1,3}}) \, . \nonumber
\end{equation}
Then the parameters regarding multivariate attacks are
\begin{equation} \label{eq:spy_f1}
f_{1} = \left[\begin{matrix} f_{tie_{1,2}} & f_{tie_{1,3}} & f_{tie_{1}}\end{matrix}\right]^{\top}, \nonumber
\end{equation}
\begin{equation} \label{eq:spy_D1f}
D_{1,f} = \left[\begin{matrix} 1 & 0 & 0 & 0 & 0 & 0 & 0 & 0 \\ 0 & 1 & 0 & 0 & 0 & 0 & 0 & 0 \\ 0 & 0 & 0 & 0 & 0 & 0 & 1 & 0 \end{matrix}\right]^{\top}, \quad B_{1,f} = \left[\begin{matrix} 0 & 0 & 0 & 0 & 0 & -k_{1} \\ 0 & 0 & 0 & 0 & 0 & -k_{1} \\ 0 & 0 & 0 & 0 & 0 & 0 \end{matrix}\right]^{\top}. \nonumber
\end{equation}

\subsection{Additional simulation results}\label{subsec:add_sim_res}
\begin{figure}[t!]
	\centering
	\begin{subfigure}{0.49\textwidth}
		\centering
		\includegraphics[scale=0.52]{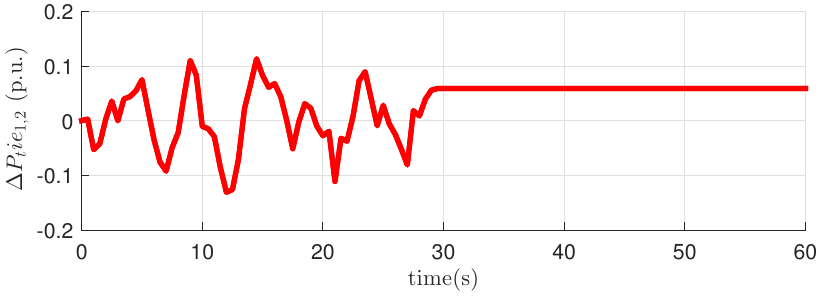}
		\caption{$\Delta P_{tie_{1,2}}$ under DoS attacks from $k_{dos}=30$}\label{subfig31:pdos}
	\end{subfigure} ~ 
	\begin{subfigure}{0.49\textwidth}
		\centering
		\includegraphics[scale=0.52]{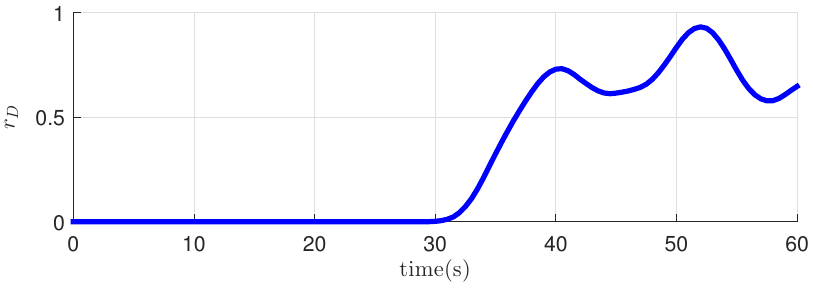}
		\caption{Residual signal $r_{D}$ under DoS attacks}\label{subfig32:rdos}
	\end{subfigure}
	\caption{Results of dynamic detector (diagnosis filter) under DoS attacks on $\Delta P_{tie_{1,2}}$ ($p=0.8$). }
	\label{fig:resfig_dos}
\end{figure}
In Figure~\ref{fig:resfig_diffp_unstealth} we present the simulation results of the residual signal $r_{D}$ from the proposed diagnosis filter under different {\em poles} ($p=0.1, \ 0.6, \ 0.98$, respectively). We also show the simulation results of the residual signal $r_{D}$ from the proposed diagnosis filter under DoS attacks in Figure~\ref{fig:resfig_dos}. 
%

	\bibliographystyle{siam}
	\bibliography{literature}
\end{document}